\title{Conservativity of Type Theory over Higher-order Arithmetic}
\author
    {Benno {van den Berg}}
    {ILLC, University of Amsterdam, The Netherlands \and \url{https://staff.fnwi.uva.nl/b.vandenberg3/}}
    {bennovdberg@gmail.com}
    {}
    {} 
\author
    {Dani\"el Otten}
    {ILLC, University of Amsterdam, The Netherlands \and \url{http://otten.co}}
    {daniel@otten.co}
    {https://orcid.org/0000-0003-2557-3959}
    {}
\authorrunning{Benno van den Berg and Dani\"el Otten} 
\keywords{Conservativity, Arithmetic, Realizability, Calculus of Inductive Constructions} 
\newcommand\from\leftarrow
\newcommand\To\Rightarrow
\newcommand\From\Leftarrow
\newcommand\longto\longrightarrow
\newcommand\longfrom\longleftarrow
\newcommand\limplies\rightarrow
\newcommand\limpliedby\leftarrow
\newcommand\liff\leftrightarrow
\newcommand\biglor\bigvee
\newcommand\bigland\bigwedge
\newcommand\<\langle
\renewcommand\>\rangle
\newcommand\llb\llbracket
\newcommand\rrb\rrbracket
\newcommand\dom{\operatorname{dom}}
\newcommand\bb\mathbb
\renewcommand\cal\mathcal
\renewcommand\sf\mathsf
\renewcommand\rm\mathrm
\renewcommand\frak\mathfrak
\renewcommand\S{\sf S}
\newcommand\W{\rm W}
\newcommand\Prop{\sf{Prop}}
\newcommand\Set{\sf{Set}}
\newcommand\Type{\sf{Type}}
\newcommand\openbullet{{\makesize\circ\bullet}}
\newcommand\den\downarrow
\newcommand\nden\uparrow
\newlength\makesize@widthfrom
\newlength\makesize@widthto
\newlength\makesize@widthhalfdiff
\newcommand\makesize[3][c]{%
    \ifmmode%
        \text{\makesize@do{#1}{\(\m@th{#2}\)}{\(\m@th{#3}\)}}%
    \else%
        \makesize@do{#1}{#2}{#3}%
    \fi%
}
\newcommand\makesize@do[3]{%
    \settowidth\makesize@widthfrom{#2}%
    \settowidth\makesize@widthto{#3}%
    \setlength\makesize@widthhalfdiff{0.5\makesize@widthto - 0.5\makesize@widthfrom}%
    \if#1s%
        \makebox[\makesize@widthto][s]{#2}%
    \else%
        \vphantom{#3}%
        \if#1l\smash{#2}\fi%
        \hspace*{\makesize@widthhalfdiff}%
        \if#1c\smash{#2}\fi%
        \hspace*{\makesize@widthhalfdiff}%
        \if#1r\smash{#2}\fi%
    \fi%
}
\begin{document}
    \maketitle

    \begin{abstract}
        We investigate how much type theory is able to prove about the natural numbers.
        A classical result in this area shows that dependent type theory without any universes is conservative over Heyting Arithmetic (HA).
        We build on this result by showing that type theories with one level of universes are conservative over Higher-order Heyting Arithmetic (HAH).
        Although this clearly depends on the specific type theory, we show that the interpretation of logic also plays a major role.
        For proof-irrelevant interpretations, we will see that strong versions of type theory prove exactly the same higher-order arithmetical formulas as HAH.
        Conversely, proof-relevant interpretations prove strictly more second-order arithmetical formulas than HAH, however they still prove exactly the same first-order arithmetical formulas.
        Along the way, we investigate the different interpretations of logic in type theory, and to what extent dependent type theories can be seen as extensions of higher-order logic.
        We apply our results by proving De Jongh's theorem for type theory.
    \end{abstract}

    \section{Introduction}\label{sec:introduction}

    The arithmetical statements that are provable in a theory reveal a lot of about its properties.
    It determines its consistency strength, its logic, which functions it can prove to be recursive, and which other theories it can prove to be consistent.
    In this work, we investigate this for dependent type theories with a single level of universes.
    Taking a general approach: we consider predicative and impredicative versions of type theory, intensional and extensional versions, and a wide array of type constructors.
    We also consider a large range of interpretations of higher-order logic in type theory: proof-relevant and proof-irrelevant interpretations of logical connectives, and three different interpretations of power sets.

    \subparagraph*{Main Results.}
    Strong versions of type theory with a single level of universes prove: \begin{itemize}
        \item[\textbullet] the same \underline{higher-order} arithmetical formulas as HAH for \underline{proof-irrelevant} interpretations,
        \item[\textopenbullet] the same \underline{\makesize{first-order}{higher-order}} arithmetical formulas as HAH for \underline{\makesize{proof-relevant}{proof-irrelevant}} interpretations.
    \end{itemize}
    Moreover, this result is maximal: type theories can prove more second-order arithmetical formulas than HAH for proof-relevant interpretations (the axiom of choice, for example).
    The main type theory that we consider is a version of the Calculus of Inductive Constructions (\(\lambda\rm C{+}\)), specified in \autoref{sec:type_theory}.
    Conservativity for weaker type theories follows from this.

    \newpage
    
    We construct a model for \(\lambda\rm C{+}\) based on subsingletons, partial equivalence relations (PERs), and assemblies.
    The innovation is that we only use arithmetical notions, which gives us a realizability interpretation of \(\lambda\rm C{+}\) in HAH.
    We obtain the proof-irrelevant result by showing that the composition of interpretations HAH\({}\to\lambda\)C\({+}\to{}\)HAH is the identity up to logical equivalence.
    The proof-relevant result is more involved.
    First, we conservatively extend HAH with primitive notions for partial application.
    Then, we conservatively add a computational choice principle based on Hilbert-style epsilon-constants.
    Lastly, we modify our model and proof using this principle.
    To formulate these extensions, we define and use a higher-order version of Beeson's logic of partial terms.
    Lastly, we use our results to prove a De Jongh's theorem for type theory, which indicates that the rules of these type theories do not `disrupt' their intuitionistic natures.

    \section{Higher-order Arithmetic}\label{sec:arithmetic}

    We start by stating the various theories of natural numbers.
    Although they share the same language and axioms, these theories differ in their underlying logic.
    First, we explain and motivate these logics.
    Then, we will introduce the various theories of arithmetic.
    
    \subsection{Higher-order Logic}\label{subsec:logic}

    \subparagraph*{Motivation.}
    There are many versions of higher-order logic, and the notation is not standardised.
    So, before giving a formal introduction, we will first motivate our choices.
    Our version of higher-order logic can quantify over relations; this directly generalises \(n\)-th-order logic.
    There are also versions that quantify over functions.
    We observe that in intuitionistic logic, quantifying over relations is more expressive than quantifying over functions: \begin{itemize}
        \item
        We can encode an \(n\)-ary function \(f\) as an \((n+1)\)-ary relation \(R\) satisfying \(\forall\vec x\,\exists!y\,R(\vec x,y)\), see for instance \cite[Section 2.7]{TroelstraVanDalen1988I}.
        This means that we can replace \(\exists f\,\dots\) and \(\forall f\,\dots\) with \(\exists R\,(\forall\vec x\,\exists!y\,R(\vec x,y)\land\dots)\) and \(\forall R\,(\forall\vec x\,\exists!y\,R(\vec x,y)\to\dots)\).
        
        \item
        In classical logic, if we have terms \(a\ne b\), then it is also possible to encode an \(n\)-ary relation \(R\) as an \(n\)-ary function \(f\) satisfying \(\forall\vec x\,(f(\vec x)=a\lor f(\vec x)=b)\).
        However, in intuitionistic logic, this only encodes those \(R\) that satisfy \(\forall\vec x\,(R(\vec x)\lor\neg R(\vec x))\).
    \end{itemize}
    So, for simplicity, we present higher-order logic using only quantifiers over relations.
    Our second observation is that it is often enough to consider only unary relations: \begin{itemize}
        \item
        If the theory can encode tuples, then instead of quantifying over an \(n\)-ary relation \(R\), we can quantify over a unary relation \(X\): we replace \(R(x_0,\dots\,x_{n-1})\) with \(X(\<x_0,\dots,x_{n-1}\>)\).
    \end{itemize}
    As we will see, this is true for arithmetic, so for our purposes it is sufficient to quantify over unary relations.
    These are just subsets so we will use the set theoretic notation \(x\in X\) for \(X(x)\).
    Restricting to the unary case gives us `monadic' versions of logic.

    \begin{definition}
        A \emph{monadic higher-order logic} is a many-sorted logic with a sort for every numeral \(n=0,1,\dots\).
        If we write \(a^n\), then the term \(a\) is of the \(n\)-th sort, intuitively a member of the \(n\)-th power set of the domain.
        Terms are built using function symbols \(f\), which each have a signature \(n_0\times\dots\times n_{k-1}\to m\).
        We also allow relation symbols, which each have a signature \(n_0\times\dots\times n_{k-1}\).
        We always assume that the language has relation symbols \(=^n:n\times n\) and \(\in^n:n\times(n+1)\).
        Formulas are given by: \[
            A,B,\dots~::=~R(a_0^{n_0},\dots,a_{k-1}^{n_{k-1}})~|~\bot\,|~\top~|~A\lor B~|~A\land B~|~A\to B~|~\exists x^n\,B[x^n]~|~\forall x^n\,B[x^n].
        \]
        There are classical and intuitionistic versions of higher-order logic.
        For both we take standard inference rules for propositional logic, quantifiers, and equality.
        In addition, we have two axiom schemes for the element relation; for any \(n\) and any formula \(P[z^n]\) we have the axioms: \begin{gather*}
            \forall X^{n+1}\,\forall Y^{n+1}\,(\forall z^n\,(z\in X\liff z\in Y)\to X=Y), \tag{extensionality} \\
            \exists X^{n+1}\,\forall z^n\,(z\in X\liff P[z]). \tag{comprehension}
        \end{gather*}
    \end{definition}
    To reduce clutter, we omit the sorts in places where they can easily be inferred.

    \begin{definition}
        We define \emph{monadic \(n\)-th-order logic} as the restriction of monadic higher-order logic to the sorts \(0,\dots,n-1\).
    \end{definition}

    \subparagraph*{Defining Logical Connectives.}
    We could also formulate second or higher-order logic in an even more minimalistic way, using only \(\in\), \(\to\), and \(\forall\).
    This is because, by quantifying over a proposition \(Z\) (a nullary relation), we can define the other logical connectives: \begin{align*}
        \bot&\coloneqq\forall Z\,Z, \tag{false} \\
        \top&\coloneqq\forall Z\,(Z\to Z), \tag{true} \\
        A\lor B&\coloneqq\forall Z\,((A\to Z)\to(B\to Z)\to Z), \tag{disjunction} \\
        A\land B&\coloneqq\forall Z\,((A\to(B\to Z))\to Z), \tag{conjunction} \\
        \exists x^n\,B[x]&\coloneqq\forall Z\,(\forall x^n\,(B[x]\to Z)\to Z). \tag{existential quantifier}
    \intertext{We can use similar definitions in monadic versions of logic by filling in an arbitrary variable \(x^0\).
    For example, we could define \(\bot\) as \(\forall X^1\,(x\in X)\) and \(\top\) as \(\forall X^1\,(x\in X\to x\in X)\).
    Similarly, we can define equality using Leibniz's principle:}
        (a=^nb)&\coloneqq\forall X^{n+1}\,(a\in X\to b\in X). \tag{equality} 
    \end{align*}
    It is a nice exercise to show that these formulas indeed satisfy the correct inference rules.
    This alternative definition will allow us to simplify our proof for conservativity.

    \subsection{Arithmetic}\label{subsec:arithmetic}
    
    \subparagraph*{Language and Axioms.}
    The language of our arithmetical theories consists of a zero constant \(0:0\) (a nullary function symbol), a successor function \(\sf S:0\to 0\), addition \(+:0\times0\to 0\), and multiplication \(\times:0\times0\to 0\).
    We have axioms stating that \(0\) and \(\sf S\) are jointly injective: \begin{align*}
        &\forall y\,(\sf S(y)\ne0), &
        &\forall x\,\forall y\,(\sf S(x)=\sf S(y)\to x=y).
    \intertext{In addition, we have axioms for addition and multiplication:}
        &\forall y\,(0+y=\makesize y0), &
        &\forall x\,\forall y\,(\sf S(x)+y=\sf S(x+y)), \\
        &\forall y\,(0\times y=0), &
        &\forall x\,\forall y\,(\sf S(x)\times y=(x\times y)+x).
    \end{align*}
    And we have an axiom scheme for induction; for every formula \(A[x]\) we have the axiom: \[
        A[0]\land\forall x(A[x]\to A[\sf S(x)])\to\forall x\,A[x].
    \]

    \begin{definition}[Peano and Heyting arithmetic]
        We define the following theories, all with the same language and axioms above, and each with different logical inference rules: \[
            \begin{array}{ccc}
                & \text{classical} & \text{intuitionistic} \\
                \cline{2-3}
                \multicolumn{1}{r|}{n\text{-th-order}} & \rm{PA}n & \rm{HA}n \\
                \multicolumn{1}{r|}{\text{higher-order}} & \rm{PA}\makesize[l]{\rm H}n & \rm{HA}\makesize[l]{\rm H}n
            \end{array}
        \]
    \end{definition}
    HAH is stronger than HA\({}^\omega\), where we can quantify over functions instead of relations \cite{Troelstra1973}.

    \subparagraph*{Tuples and Recursion Theory.}
    One might wonder why we only include addition and multiplication, instead of going further and including exponentiation.
    This is because addition and multiplication already allow us to encode tuples, which is sufficient to develop recursion theory.
    First, define a bijection \(\bb N\times\bb N\to\bb N\) \cite{Cantor1877}, and use it to define tuples: \begin{align*}
        \<a,b\>\coloneqq((a+b)\times\sf S(a+b))/2+b &
        &\<a_0,\dots,a_{n-1}\>&\coloneqq\<a_0,\<a_1,\dots,\<a_{n-2},a_{n-1}\>\>\>.
    \end{align*}
    We can define \(b^a=c\) as a formula stating: there exists a tuple \(\<x_0,\dots,x_a\>\) such that \(x_0=1\), for every \(i<a\) we have \(x_{i+1}=x_i\times b\), and \(x_a=z\).
    Other primitive recursive functions can be defined in a similar way.
    For a more detailed explanation, see \cite{Caicedo2013,HajekPudlak2017,Kaye1991}.
    
    \section{Interpreting Higher-order Arithmetic in Type Theory}\label{sec:interpreting_arithmetic}

    An interpretation of HAH-formulas in type theory can be divided into three parts: defining natural numbers, logical connectives, and power sets in type theory.
    For each part there are multiple options, which come with different requirements on the type theory.
    These requirements make sure that the type theory satisfies the rules and axioms of HAH for the interpretation.
    We systematically consider the three parts in the following subsections, and, in the end, we will have a clear overview of the various interpretations.
    For now, we assume that we have a standard version of dependent type theory, like the predicative Martin-L\"of type theory \cite{MartinLof84}, or the impredicative Calculus of (Inductive) Constructions \cite{CoquandHuet1988,Bertot2013,Paulin2015}.
    In the next section we define a strong version of type theory (\(\lambda\rm C{+}\)) that allows all interpretations.

    \subsection{Interpreting Natural Numbers}\label{subsec:interpreting_naturals}

    We will assume a type of natural numbers with inference rules: \begin{flalign*}
        \mathrlap{\prftree[r]{\(\bb N\)-F,}
            {\vdash\bb N:\Type}} &&
        \mathclap{\prftree[r]{\(\bb N\)-I\({}_0\),}
            {\vdash 0:\bb N}} &&
        \mathllap{\prftree[r]{\(\bb N\)-I\({}_\S\),}
            {\Gamma\vdash n:\bb N}
            {\Gamma\vdash\S\,n:\bb N}} \\[2ex] &&
        \mathclap{\prftree[r]{\(\bb N\)-E,}
            {\Gamma,n:\bb N\vdash C[n]:\Type}
            {\Gamma\vdash c:C[0]}
            {\Gamma\vdash f:\Pi(n:\bb N)\,(C[n]\to C[\sf S\,n])}
            {\Gamma\vdash \sf{ind}^{\bb N}_C\,c\,f:\Pi(n:\bb N)\,C[n]}} \\[2ex]
        \mathrlap{\prftree[r]{\(\bb N\)-\(\beta_0\),}
            {\sf{ind}^{\bb N}_C\,c\,f\,0\equiv c}} && &&
        \mathllap{\prftree[r]{\(\bb N\)-\(\beta_\S\).}
            {\sf{ind}^{\bb N}_C\,c\,f\,(\S\,n)\equiv f\,n\,(\sf{ind}^{\bb N}_C\,c\,f\,n)}}
    \end{flalign*}
    It is sufficient for the \(\beta\)-reduction rules to be satisfied propositionally, an example that satisfies this is the definition of natural numbers using \(\W\)-types \cite{MartinLof84, Dybjer1997}.

    A non-example is the Church encoding of natural numbers in an impredicative type theory.
    In such a type theory, there exists a special universe \(\sf{Prop}\) that is closed under products over all types.
    So, if we have an arbitrary type \(A\), and for \(x:A\) a type \(B[x]:\Prop\), then we always have \(\Pi(x:A)\,B[x]:\Prop\).
    This gives allows for self-referential definitions, that define a type in \(\Prop\) by quantifying over all types in \(\Prop\); for example, the Church naturals: \[
        \bb N\coloneqq\Pi(C:\Prop)\,(C\to((C\to C)\to C)):\Prop.
    \]
    The idea is that a numeral \(n\) can be encoded as \(\lambda C\,\lambda c\,\lambda f\,f^n\,c\).
    For \(0\coloneqq\lambda C\,\lambda c\,\lambda f\,c\) and \(\S\,n\coloneqq\lambda C\,\lambda c\,\lambda f\,f\,(n\,C\,c\,f)\) it indeed satisfies the formation and introduction rules.
    However, it only satisfies a weak form of the elimination rule.
    For \(\sf{rec}^{\bb N}_C\,c\,f\coloneqq\lambda n.\,n\,C\,c\,f\) we have: \begin{flalign*}
        && \prftree[r]{\(\bb N\)-E, weak.}
            {\Gamma\vdash C:\Prop}
            {\Gamma\vdash c:C}
            {\Gamma\vdash f:C\to C}
            {\Gamma\vdash n:\bb N}
            {\Gamma\vdash\sf{rec}^{\bb N}_C\,c\,f:\bb N\to C} &&
    \end{flalign*}
    This is weaker in two ways: (a) it only gives functions instead of dependent functions, and (b) the codomain must be in \(\Prop\).
    From the perspective of category theory this means that we only have a weak natural numbers object \cite{Awodey2012}, and crucially, from a logical perspective this will mean that we cannot prove the axiom scheme of induction.
    
    In the Calculus of Constructions it is not possible to define a type in \(\Prop\) that satisfies the strong elimination rule \cite{Geuvers2001}.
    The same story is true for other inductive types: we can define types that satisfy the correct introduction rules, however they only satisfy a weak version of the elimination rules \cite{Girard1989}.
    There is some exciting recent work in this area \cite{AwodeyFreySpeight2018,Shulman2018}: using some additional assumptions (\(\bb N\), \(\Sigma\), \(=\), and function extensionality) we can define inductive types that do not have limitation (a); however, limitation (b) still applies.
    
    \subsection{Interpreting Logical Connectives}\label{subsec:interpreting_connectives}

    Logical connectives are the most influential part: we have proof-relevant and proof-irrelevant interpretations.
    We think of these as white box and black box interpretations respectively: \begin{align*}
        (A\lor B)^\openbullet&\coloneqq A^\openbullet+B^\openbullet, & (A\lor B)^\bullet&\coloneqq\|A^\bullet+B^\bullet\|, \\
        (A\land B)^\openbullet&\coloneqq A^\openbullet\times B^\openbullet, & (A\land B)^\bullet&\coloneqq\phantom\|A^\bullet\times B^\bullet, \\
        (A\to B)^\openbullet&\coloneqq A^\openbullet\to B^\openbullet, & (A\to B)^\bullet&\coloneqq\phantom\| A^\bullet\to B^\bullet, \\
        (\makesize\exists\forall x^A\,B[x])^\openbullet&\coloneqq\makesize\Sigma\Pi(x:A)\,B[x]^\openbullet, & (\makesize\exists\forall x^A\,B[x])^\bullet&\coloneqq\|\makesize\Sigma\Pi(x:A)\,B[x]^\bullet\|, \\
        (\forall x^A\,B[x])^\openbullet&\coloneqq\Pi(x:A)\,B[x]^\openbullet, & (\forall x^A\,B[x])^\bullet&\coloneqq\phantom\|\Pi(x:A)\,B[x]^\bullet.
    \end{align*}
    The difference is that the proof-irrelevant interpretation uses propositional-truncation \cite[Section 3.7]{HOTT}.
    This makes sure that the type is always an h-proposition: a type of which every two terms are equal.
    Conversely, for the proof-relevant interpretation: a term of the type shows how we know that the formula is true.
    So, a proof for \(A\lor B\) consists of a proof for \(A\) or a proof for \(B\), while a proof for \(\exists x^A\,B[x]\) consists of an \(a:A\) and a proof of \(B[a]\).

    \subsection{Interpreting Power Sets}\label{subsec:interpreting_power_sets}

    Power sets are the most difficult part to interpret.
    In type theory, the power type is usually defined as \(\cal P\,A\coloneqq A\to\cal U\), where \(\cal U\) is a universe.
    We define: \begin{align*}
        (X=^nY)^\openbullet&\coloneqq X=_{\cal P^n\bb N}Y, & (X=^nY)^\bullet&\coloneqq\|X=_{\cal P^n\bb N}Y\| &  \\
        (x\in^nX)^\openbullet&\coloneqq X\,x, & (x\in^n X)^\bullet&\coloneqq\|X\,x\|.
    \end{align*}
    To satisfy extensionality and comprehension, we need to make assumptions on \(\cal U\): \begin{itemize}
        \item To satisfy extensionality, it is necessary and sufficient for \(\cal U\) to satisfy: \begin{align*}
            \sf{funext}&:\Pi(f,f':A\to\cal U)\,(\Pi(x:A)\,(f\,x=f'\,x)\to(f=f')), \\
            \sf{propext}&:\Pi(P,P':\cal U)\,((P\liff P')\to(P=P')).
        \end{align*}
        Note that \(\sf{propext}\) implies that \(\cal U\) only contains h-propositions: if \(p:P\), then \(P\liff\bb 1\) so \(P=\bb 1\), which means that for any \(p':P\) we have \(p=p'\).
        We can remove these requirements by changing the interpretation.
        First, consider the equivalence relation on \(A\to\cal U\) given by: \(X\sim Y\coloneqq\Pi(z:A)\,(X\,z\liff Y\,z)\).
        One option is to interpret \(X=^nY\) using this equivalence relation instead of equality.
        The other option is to assume that we have quotient types and to define \(\cal P\,A\coloneqq (A\to\cal U)/{\sim}\).
        In the strong type theory that we consider in the next section, all three options can be shown to be equivalent, see \autoref{app:power_types}.
        So, these alternatives do not affect the conservativity results.
        
        \item To satisfy comprehension, we need to assume some amount of impredicativity.
        For the proof-irrelevant interpretation, we do not have to make the entire universe impredicative.
        It is sufficient to assume propositional-resizing: for any h-proposition, there exists a type in \(\cal U\) that is equivalent to it.
        To see this, consider an HAH-formula \(P[z^n]\).
        We see that \(P[z^n]^\bullet\) is an h-proposition, so, there exists a type in \(\cal U\) that is equivalent to it.
        This gives us an \(X:\cal P^{n+1}\,\bb N\) such that for every \(z:\cal P^n\,\bb N\) we have \(X\,z\liff P[z]^\bullet\).
    \end{itemize}
    
    \section{Type Theory}\label{sec:type_theory}

    Now that we have seen the options for interpreting HAH in type theory, we will formulate a strong version of type theory that allows all interpretations.
    This theory is impredicative, extensional, and includes inductive types.
    By proving conservativity for this strong version, we also obtain conservativity results for weaker versions: most notably for more predicative, and intensional versions of type theory.
    
    Our type theory, which we call \(\lambda\)C\(+\), can be seen as a version of the Calculus of Inductive Constructions \cite{CoquandHuet1988,Bertot2013,Paulin2015} with only one level of universes.
    It has two impredicative type universes: \(\Prop,\Set:\Type\).
    It is extensional, so definitional equality and propositional equality coincide, which implies that we have function extensionality and uniqueness of identity proofs \cite{Hofmann1997}.
    We assume that we have \(\bb 0,\bb 1,\bb 2,\dots,\bb N,\Sigma,\Pi,\W,{=},\|\cdot\|\), and quotient types.
    Note that the universes are both types themselves, so we can use them to construct new types like \(\Prop\times\Set\) and \(\bb N\to\Prop\).
    We use \(\Prop\) to interpret logical connectives and power sets.
    For this, we assume that \(\Prop\) satisfies the axiom of propositional extensionality.
    This implies that the types in \(\Prop\) are h-propositions.
    We use \(\Set\) to interpret the natural numbers, that is, we assume \(\bb N:\Set\).
    Because we have uniqueness of identity proofs, in particular we see that every type in \(\Set\) is an h-set.
    Both universes are at the same level, that is, we do \emph{not} have \(\Prop:\Set\) or \(\Set:\Prop\).
    However, we do assume that \(\Prop\) is a subuniverse of \(\Set\), so \(A:\Prop\) implies \(A:\Set\), and we have that \(A:\Set\) implies \(A:\Type\).
    The full rules of the type theory are stated in \autoref{app:type_theory}.

    \section{Interpreting Type Theory in Higher-order Arithmetic}\label{sec:interpreting_type_theory}

    To interpret our type theory in arithmetic, we will construct a model of \(\lambda\)C\(+\) using only notions that we can express within HAH.
    Our model can be seen as an modification of Hyland's small complete category \cite{Hyland1988}, which forms a model for the calculus of constructions \cite{Reus1999}.
    Our description can be incorporated into one of the usual categorical frameworks for type theory, such as comprehension categories \cite{Jacobs1993}, or categories with families \cite{Hofmann1997}.
    The main idea is that we interpret our universes using the following categories: \begin{alignat*}{2}
        \Prop&\rightsquigarrow\sf{Subsing}~~ & &\text{(the category of subsingletons)}, \\
        \Set&\rightsquigarrow\sf{PER}~~ & &\text{(the category of partial equivalence relations)}, \\
        \Type&\rightsquigarrow\sf{Assem}~~ & &\text{(the category of assemblies or }\cal K_1\text{{}-sets)}.
    \end{alignat*}
    We will show that these categories have the right structure to interpret \(\lambda\rm C{+}\), namely: embeddings \(\sf{Subsing}\hookrightarrow\sf{PER}\hookrightarrow\sf{Assem}\), that we can encode \(\sf{Subsing}\) and \(\sf{PER}\) as objects in \(\sf{Assem}\), that we can define \(\mathbf0,\mathbf1,\mathbf2,\dots,\mathbf N,\Sigma,\Pi,\W,=,\|\cdot\|,{/}\), and that \(\sf{propext}\) and \(\sf{reflexion}\) are satisfied.
    We use this model to interpret every type in \(\lambda\)C\(+\) as a formula in HAH: the formula stating that the type is inhabited in the model.
    However, first we show why a naive model---of propositions, sets, and types in \(\lambda\)C\(+\) as sets in HAH or ZFC---cannot work.
    The notions we define for this naive approach will be useful to define our actual model.

    \subsection{Sets in HAH}

    \subparagraph*{Conventions.}
    The sets in HAH are all subsets of \(\cal P^n(\bb N)\) for some \(n\).
    It will be very convenient if we can view \(\cal P^n(\bb N)\) as a subset of \(\cal P^{n+1}(\bb N)\).
    Our motivation for this is that we want to define notions such as \(\Sigma(a\in A)\,B[a],\Pi(a\in A)\,B[a],\W(a\in A)\,B[a]\).
    If we view the hierarchy as cumulative, then we only need to define these notions for the case where \(A\) and all \(B[a]\) are subsets of the same \(\cal P^n(\bb N)\).
    One way to achieve this is by considering \(x\in\bb N\) to be equal to \(\{\cdots\{x\}\cdots\}\in\cal P^n(\bb N)\).
    This already gives us a cumulative hierarchy.
    For example: \(\{0,2\}\in\cal P(\bb N)\) is viewed as \(\{\{0\},\{2\}\}\in\cal P^2(\bb N)\), and \(\{\{\{0\}\},\{\{2\}\}\}\in\cal P^3(\bb N)\), and so on.
    More formally: we define inclusions \(\iota_n:\cal P^n(\bb N)\to\cal P^{n+1}(\bb N)\) by \(\iota_0(x)\coloneqq\{x\}\) and \(\iota_{n+1}(X)\coloneqq\{\iota_n(x)~|~x\in X\}\).
    These are embeddings because they preserve \(\in\)-relation: we have \(x\in Y\) iff \(\iota_n(x)\in\iota_{n+1}(Y)\).
    From now on, we will use these emdeddings implicitly.
    
    Secondly, it is convenient if we extend our definition of pairs from natural numbers to sets.
    We do this using the disjoint union, for \(A,B\subseteq\cal P^n(\bb N)\) we inductively define: \[
        \<A,B\>\coloneqq\{p\in\cal P^n(\bb N)~|~\exists(a\in A)\,(p=\<0,a\>)\lor\exists(b\in B)\,(p=\<1,b\>)\}\in\cal P^n(\bb N).
    \]

    \subparagraph*{Definitions.}
    Now we can inductively define \(\Sigma,\Pi,\W\) inside HAH.
    If we have a set \(A\subseteq\cal P^n(\bb N)\) and for every \(a\in A\) a set \(B[a]\subseteq\cal P^n(\bb N)\), then we define the dependent Cartesian product and dependent function space as follows: \begin{alignat*}{2}
        \Sigma(a\in A)\,B[a]&\coloneqq\{\makesize pP\makesize\in{{}\subseteq{}}\makesize[l]{\cal P^n(\bb N)}{\Sigma(a\in A)\,B[a]}~|~\makesize\exists\forall(a\in A)\,\makesize[r]\exists{\exists!}(b\in B[a])\,(\<a,b\>=\makesize pP)\} & &\subseteq\cal P^n(\bb N), \\
        \Pi(a\in A)\,B[a]&\coloneqq\{P\subseteq\Sigma(a\in A)\,B[a]~|~\forall(a\in A)\,\exists!(b\in B[a])\,(\<a,b\>\makesize\in{{}={}} P)\} & &\subseteq\cal P^{n+1}(\bb N).
    \end{alignat*}
    To define \(\W(a\in A)\,B[a]\) we have to define labelled trees in HAH.
    A tree for \(A\) and \(B\) must satisfy the following: every node has a label \(a\in A\), and precisely one child for every \(b\in B[a]\).
    We will encode a tree by describing the set of finite paths starting at the root.
    So, a tree will be a set \(T\subseteq\cal P^n(\bb N)\) whose elements are of the form \(\<a_0,b_0,a_1,\dots,a_{n-1},b_{n-1},a_n\>\) such that for every \(i\) we have \(a_i\in A\) and \(b_i\in B[a_i]\).
    This set should be: \begin{itemize}
        \item inhabited: there exists an \(a\in A\) such that \(\<a\>\in T\);
        \item downward-closed: if \(\<a_0,b_0,a_1,\dots,a_n,b_n,a_{n+1}\>\in T\) then \(\<a_0,b_0,a_1,\dots,a_n\>\in T\);
        \item complete: if we have \(\<a_0,b_0,a_1,\dots,a_n\>\in T\) and \(b_n\in B[a_n]\) then there exists an \(a_{n+1}\in A\) such that \(\<a_0,b_0,a_1,\dots,a_n,b_n,a_{n+1}\>\in T\);
        \item consistent: if \(\<a_0,b_0,a_1,\dots,b_{n-1},a_n\>,\<a_0,b_0,a_1,\dots,b_{n-1},a_n'\>\in T\) then \(a_n=a_n'\).
    \end{itemize}
    For two paths \(p,q\in T\), we write \(p\sqsubseteq q\) iff \(p\) is a subpath of \(q\), that is, iff we can write \(p=\<a_0,b_0,a_1,\dots,a_n\>\) and \(q=\<a_0,b_0,a_1,\dots,a_m\>\) where \(n\le m\).
    We call a tree \(T\) well-founded iff the inverse relation \(\sqsupseteq\) is well-founded.
    Now for \(A,B[a\in A]\subseteq\cal P^n(\bb N)\) we define: \begin{alignat*}{2}
        \W(a\in A)\,B[a]&\coloneqq\{T\subseteq\cal P^n(\bb N)~|~T\text{ is a well-founded tree for \(A\) and \(B[a]\)}\} & &\subseteq\cal P^{n+1}(\bb N).
    \end{alignat*}

    \subparagraph*{Problems.}
    It is important to note for \(A,B[a\in A]\subseteq\cal P^n(\bb N)\) that, while \(\Sigma(a\in A)\,B[a]\) is still a subset of \(\cal P^n(\bb N)\), we see that \(\Pi(a\in A)\,B[a]\) and \(\W(a\in A)\,B[a]\) are both subsets of \(\cal P^{n+1}(\bb N)\).
    So \(\Pi\) and \(\W\) increase the level.
    This is a problem: if we interpret \(\Prop\) and \(\Set\) as subsets of some \(\cal P^n(\bb N)\) then they cannot be closed under \(\Pi\) and \(\W\).
    This problem exists in general for naive interpretations of impredicative type theory.
    If we interpret an impredicative universe as a set \(\cal U\) in ZFC, then for \(A,B\in\cal U\) we must have \(A\to B\coloneqq\Pi(a\in A)\,B\in\cal U\).
    If \(\cal U\) contains a set \(A\) with at least two elements, then we get a contradiction for the cardinality: \begin{align*}
        \textstyle|\Pi(B\in\cal U)\,(B\to A)|
        &=\textstyle|(\Sigma(B\in\cal U)\,B)\to A| \tag{by Currying} \\
        &\ge\textstyle|\cal P(\Sigma(B\in\cal U)\,B)| \tag{because \(|A|\ge2\)} \\
        &\ge\textstyle|\cal P(\Pi(B\in\cal U)\,(B\to A))| \tag{because \(\Pi(B\in\cal U)\,(B\to A)\in\cal U\)} \\
        &>\textstyle|\Pi(B\in\cal U)\,(B\to A)|.\tag{by Cantor's diagonal argument}
    \end{align*}
    This counterexample comes from lectures of Hyland and Streicher, see \cite{LongoMoggi1991,Reus1999}.
    If \(\cal U\) only consists of subsingletons, then we have no contradiction, and we obtain a model for simple type theories like ML0 and \(\lambda\rm C\) \cite{Smith1988}.
    Indeed, we use this approach to interpret \(\sf{Prop}\) as the set \(\sf{Subsing}\coloneqq\cal P(\{*\})\).
    The intuitive idea behind the other categories, \(\sf{PER}\) and \(\sf{Assem}\), is that we restrict \(\Pi(a\in A)\,B[a]\) to functions that are in some sense computable.

    \subsection{Subsingletons, PERs, and Assemblies}

    In this subsection we define the three categories that we use to model \(\lambda\)C\(+\).
    This first category is simple: subsets of a singleton \(\{*\}\):
    \begin{definition}[subsingleton]
        A \emph{subsingleton} is a subset \(S\subseteq\{*\}\).
        A \emph{subsingleton morphism} from \(S\) to \(T\) is just a function from \(S\) to \(T\).
    \end{definition}
    Because we want our model to satisfy propositional extensionality, we always consider subsets of the same singleton \(\{*\}\).
    Note that we cannot prove intuitionistically for every \(S\subseteq\{*\}\) that \(S=\emptyset\) or \(S=\{*\}\), so \(\cal P(\{*\})\) can be large \cite{Lubarsky2005}.
    
    The next categories are more interesting and use a notion of compatibility.
    We use Kleene's first algebra \cite{Kleene1956,Kozen1994,Conway2012}: the fact that natural numbers can be seen as codes for partial computable functions.
    For \(f,n\in\bb N\), we will write \(f\,n\den\) iff the partial computable function encoded by \(f\) is defined on the natural number \(n\), and \(f\,n\) for the result.
    In \autoref{sec:relevant_conservativity} we will consider a conservative extension of HAH where these are primitive notions that satisfy a computational choice principle.
    This will be needed to show conservativity in the proof-relevant case.
    For now however, think of \(f\,n\) as Kleene-application.
    \begin{definition}[PER]
        A \emph{partial equivalence relation (PER)} is a relation \(R\subseteq\bb N\times\bb N\) that is symmetric and transitive.
        For a PER \(R\) we define: \begin{align*}
            \dom(R)&\coloneqq\{\makesize nm\in\bb N~|~\<n,\makesize nm\>\in R\}=\{n\in\bb N~|~\exists(m\in\bb N)\,\<n,m\>\in R\}, \tag{domain} \\
            [n]_R&\coloneqq\{m\in\bb N~|~\<n,m\>\in R\}, \tag{equivalence class} \\
            \bb N/R&\coloneqq\{[n]_R~|~n\in\dom(R)\}. \tag{quotient}
        \end{align*}
        A \textit{PER morphism} from \(R\) to \(S\) is a function \(F:\bb N/R\to\bb N/S\) that is `tracked' by some \(f\in\bb N\), meaning that \(a\in\dom(R)\) implies \(f\,a\den\) and \(f\,a\in F([n]_R)\).
    \end{definition}
    Suppose that we have a type and that we want to define a PER that models it.
    The idea is that we view natural numbers as potential codes or realizers for terms of the type.
    The relation consists of those pairs of natural numbers that encode the same term.
    This explains why we consider PER's: this relation is obviously symmetric and transitive, but not necessarily reflexive because not every natural number has to encode a term.
    Using this principle we define PER's that model \(\bb0,\bb1,\bb2,\dots\), and \(\bb N\): \begin{align*}
        \mathbf n\coloneqq\{\<i,j\>~|~i=j\land i<n\},~~(\text{for }n=0,1,2,\dots) &&
        \mathbf N\coloneqq\{\<i,j\>~|~i=j\}.
    \end{align*}
    The next category generalises this idea of modelling types:
    \begin{definition}[assembly]
        An \(n\)-\textit{assembly} consists of a set \(\cal A\subseteq\cal P^n(\bb N)\) and a relation \({\Vdash}\subseteq\bb N\times\cal A\) such that for every \(A\in\cal A\) there exists an \(a\in\bb N\) such that \(a\Vdash_{\cal A}A\).
        For an \(n\)-assembly \(\cal A\) we will write \(|\cal A|\) for the set and \(\Vdash_{\cal A}\) for the relation; we call \(|\cal A|\) the domain and \(\Vdash_{\cal A}\) the realizability relation.
        An \(n\)-\textit{assembly} \textit{morphism} from \(\cal A\) to \(\cal B\) is a function \(F\) from \(\cal A\) to \(\cal B\) that is `tracked' by some \(f\in\bb N\), meaning that for every \(A\in\cal A\) and \(a\in\bb N\) with \(a\Vdash_{\cal A}A\) we have \(f\,a\den\) and \(f\,a\Vdash_{\cal B}F(A)\).
    \end{definition}
    The inclusions \(\iota_n:\cal P^n(\bb N)\to\cal P^{n+1}(\bb N)\) also give us a cumulative hierarchy of assemblies.
    
    \subparagraph*{Cumulativity.}
    We can view any subsingleton \(S\subseteq\{*\}\) as a PER \(\{\<i,j\>~|~*\in S\}\); in this way \(\sf{Subsing}\) forms a full subcategory of \(\sf{PER}\).
    Similarly we can view any PER \(R\) as a 1-assembly with domain \(\bb N/R\) and realizability relation \(\in\); in this way \(\sf{PER}\) forms a full subcategory of \(\sf{Assem}^ 1\).
    This means that we have embeddings: \(\sf{Subsing}\hookrightarrow\sf{PER}\hookrightarrow\sf{Assem}^1\).
    Lastly, for any set \(A\subseteq\cal P^n(\bb N)\), we get an \(n\)-assembly \(\nabla A\) with domain \(A\) and the total realizability relation \(\bb N\times A\); in this way \(\cal P^n(\bb N)\) also forms a full subcategory of \(\sf{Assem}^n\).

    \subparagraph*{Universes.}
    We have to show that we can see the sets \(\sf{Subsing}\) and \(\sf{PER}\) as assemblies to model \(\Prop:\Type\) and \(\Set:\Type\).
    For \(\sf{Subsing}\) this is easy, if we take \(*\coloneqq0\), then we have \(\sf{Subsing}=\cal P(\{0\})\subseteq\cal P(\bb N)\) so we get an 1-assembly \(\nabla\sf{Subsing}\).
    Similarly, we can consider a PER to be a subset of \(\bb N\times\bb N\coloneqq\Sigma(x\in\bb N)\,\bb N\subseteq\bb N\), so we get a \(1\)-assembly \(\nabla\sf{PER}\).
    
    \subsection{Modelling Type Constructors}

    We define \(\Sigma,\Pi,\W\) for assemblies by restring the definitions for sets to those elements which are realised.
    So, for \(\cal A,\cal B[A\in\cal A]\in\sf{Assem}^n\) and \(\cal Q=\Sigma,\Pi,\W\), we define the assembly \(\cal Q(A\in\cal A)\,\cal B[A]\) by taking \(|\cal Q(A\in\cal A)\,\cal B[A]|\coloneqq\{Q\in\cal Q(A\in|\cal A|)\,|\cal B[A]|~|~\exists(q\in\bb N)\,(q\Vdash Q)\}\), where \(\Vdash\) is defined as follows for the different quantifiers: \begin{itemize}[leftmargin = \widthof{\((\W)\)+}]
        \item[\((\makesize\Sigma\W)\)]
        We say \(\makesize[r]pt\Vdash\makesize[l]PT\) iff we have \(\sf{pr}_0(p)\Vdash_{\cal A}\sf{pr}_0(P)\) and \(\sf{pr}_1(p)\Vdash_{\cal B[A]}\sf{pr}_1(P)\).
        \item[\((\makesize\Pi\W)\)]
        We say \(\makesize[r]ft\Vdash\makesize[l]FT\) iff for every \(A\in\cal A\) and \(a\Vdash_{\cal A} A\) we have \(f\,a\den\) and \(f\,a\Vdash_{\cal B[A]}F(A)\).
        \item[\((\W)\)]
        We say \(t\Vdash T\) iff for every \(\<A_0,B_0,A_1,\dots,A_{n-1},B_{n-1},A_n\>\in T\) and \(b_0\Vdash_{\cal B[A_0]}B_0,\dots,\) \(b_{n-1}\Vdash_{\cal B[A_{n-1}]}B_{n-1}\) we have that for \(t_0,\dots,t_{n-1}\in\bb N\) given inductively by \(t_0\coloneqq t\) and \(t_{i+1}\coloneqq(\sf{pr}_1(t_i))\,b_i\) we have for every \(i<n+1\) that \(t_i\den\) and \(\sf{pr}_0(t_i)\Vdash_{\cal A}A_i\).
    \end{itemize}
    Now that we have defined \(\Sigma,\Pi,\W\) for assemblies, we use this to define these notions also for subsingletons and PER's.
    This is possible because of the following observation:
    
    \begin{proposition}
        Suppose that \(\cal A\in\sf{Assem}^n\) and for every \(A\in\cal A\) that \(\cal B[A]\in\sf{Assem}^n\).
        \begin{itemize}
            \item If \(\cal A\) and all \(\cal B[A]\) are isomorphic to a subsingleton/PER, then \(\Sigma(A\in\cal A)\,\cal B[A]\) is as well.
            \item If all \(\cal B[A]\) are isomorphic to a subsingleton/PER, then \(\Pi(A\in\cal A)\,\cal B[A]\) is as well.
            \item If \(\cal A\) is isomorphic to a subsingleton/PER, then \(\W(A\in\cal A)\,\cal B[A]\) is as well.
        \end{itemize}
    \end{proposition}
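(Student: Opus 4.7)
The plan is to transport along the given isomorphisms so that $\cal A$ and each $\cal B[A]$ (where the hypothesis requires it) may be assumed to literally be subsingletons or PERs, and then to construct an explicit witness in the target subcategory together with an isomorphism of assemblies.

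The subsingleton cases are essentially trivial: a subsingleton, viewed as an assembly, has at most one element, so in the relevant hypotheses $\Sigma(A\in\cal A)\,\cal B[A]$ and $\Pi(A\in\cal A)\,\cal B[A]$ each have at most one element. The witnessing subsingleton is $S=\{*\mid\text{the construction is inhabited}\}\subseteq\{*\}$, and both the iso and its tracking are immediate since there is at most one pair or function to match.

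For the PER cases of $\Sigma$ and $\Pi$, let $R$ represent $\cal A$ (when required) and $S_A$ represent $\cal B[A]$. Using pairing $\<\cdot,\cdot\>\colon\bb N\times\bb N\to\bb N$ and Kleene application we set
\begin{align*}
    T_\Sigma &\coloneqq\{\<\<a,b\>,\<a',b'\>\>\mid a\,R\,a'\land b\,S_{[a]_R}\,b'\},\\
    T_\Pi    &\coloneqq\{\<f,f'\>\mid\forall A\in\cal A\,\forall a,a'(a,a'\Vdash_{\cal A}A\to f\,a\den\land f'\,a'\den\land(f\,a)\,S_A\,(f'\,a'))\}.
\end{align*}
For $\Pi$ we do not need $\cal A$ itself to be a PER; we quantify over its realizability relation directly. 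Both $T_\Sigma$ and $T_\Pi$ are manifestly symmetric and transitive. Tracked bijections between $\bb N/T_\Sigma,\bb N/T_\Pi$ and the domains of the assemblies are given by sending $[\<a,b\>]_{T_\Sigma}$ to $\<[a]_R,[b]_{S_{[a]_R}}\>$ and $[f]_{T_\Pi}$ to $A\mapsto[f\,a]_{S_A}$ for any $a\Vdash_{\cal A}A$; trackability in both directions is immediate because realization on a PER is just membership.

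The main obstacle is $\W$. A realizer of a tree must pair a realizer of the root label (hence of $R$) with a Kleene-function sending each realizer of a child index to a realizer of the corresponding subtree, which is itself again a tree in $\W$. Consequently the PER $T_\W$ we seek is intrinsically self-referential. We define it by recursion along the well-foundedness of the encoded tree: $n\,T_\W\,n'$ iff $\sf{pr}_0(n)\,R\,\sf{pr}_0(n')$ and for every $b,b'$ realizing the same element of $\cal B[[\sf{pr}_0(n)]_R]$ the applications $\sf{pr}_1(n)\,b$ and $\sf{pr}_1(n')\,b'$ are defined and $T_\W$-related. The well-foundedness of realized trees ensures this recursion is well-posed; one then checks that $T_\W$ is a PER and that its quotient is tracked-bijective with $|\W(A\in\cal A)\,\cal B[A]|$ via pairing, unpairing, and Kleene application, completing the last and most delicate case.
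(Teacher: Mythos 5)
The paper states this proposition without giving a proof, so there is nothing to compare against line by line; judging your argument on its own, the $\Sigma$ and $\Pi$ cases are in good shape. Your $T_\Sigma$ and $T_\Pi$ are indeed PERs, their quotients are in tracked bijection with the underlying sets of the corresponding assemblies, and you correctly observe that for $\Pi$ only the fibres need to be PERs. Two smaller points elsewhere: (i) the subsingleton case of $\W$ is never addressed — here the hypothesis is on $\cal A$ alone, and you need the (easy but not vacuous) observation that when $|\cal A|=\{A_0\}$ the conditions inhabited, downward-closed, complete and consistent force the set of paths of any candidate tree, so $\W(A\in\cal A)\,\cal B[A]$ has at most one element; (ii) the opening move ``may be assumed to literally be subsingletons or PERs'' silently uses that $\Sigma,\Pi,\W$ carry isomorphic input data to isomorphic assemblies. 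That is routine, but it can be avoided altogether by working with the intrinsic characterisations: isomorphic to a subsingleton iff at most one element, isomorphic to a PER iff \emph{modest}, i.e.\ distinct elements have disjoint sets of realizers.

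The genuine gap is the PER case of $\W$. As written, $n\,T_\W\,n'$ is defined in terms of $T_\W$ at $\sf{pr}_1(n)\,b$ and $\sf{pr}_1(n')\,b'$, and your justification — ``recursion along the well-foundedness of the encoded tree'' — is circular: which tree $n$ encodes, and whether it is well-founded, is precisely what membership in $\dom(T_\W)$ is meant to express, so there is no prior well-founded structure to recurse on. One repair is to take $T_\W$ to be the least fixed point of the evident monotone operator (definable in HAH by comprehension) and then prove by induction that its domain is exactly the set of realizers of well-founded trees. A simpler repair is to notice that the paper's realizability relation for $\W$ is already defined non-circularly, as a condition on all finite paths, and to set $T_\W\coloneqq\{\<t,t'\>~|~\exists T\,(t\Vdash T\land t'\Vdash T)\}$. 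Symmetry is then immediate, and transitivity reduces to the one fact your sketch never isolates, which is exactly where the hypothesis on $\cal A$ enters: because $\cal A$ is modest, the iterated applications $\sf{pr}_0(t_i)$ determine the label $A_i$ of every node, so by induction on the length of paths a realizer $t$ realizes at most one tree. The same uniform recipe $\<q,q'\>\mapsto\exists Q\,(q\Vdash Q\land q'\Vdash Q)$ recovers your $T_\Sigma$ and $T_\Pi$ as well, reducing the whole proposition to checking that modesty and ``at most one element'' are preserved by the three constructions.
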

    Note that this is precisely what we need to model our formation rules.
    In particular, we can model the impredicative rule for products: up to isomorphism, we have that \(\Pi(A\in\cal A)\,\cal B[A]\) always lives in the same category as the \(\cal B[A]\), regardless of \(\cal A\).

    For \(\cal A\in\sf{Assem}^n\) we define propositional equality and truncation as subsingletons: \begin{align*}
        (A=_{\cal A}A')&\coloneqq\{*~|~A=A'\}\in\sf{Subsing}, &
        \|\cal A\|&\coloneqq\{*~|~\exists(A\in\cal A)\}\in\sf{Subsing}.
    \end{align*}
    Lastly, if we have \(\cal A\in\sf{Assem}^n\) and for every \(A,A'\in\cal A\) an \(\cal R[A,A']\in\sf{Assem}^n\), then we define \(\cal A/\cal R\in\sf{Assem}^{n+1}\) by taking  \(|\cal A/\cal R|\coloneqq|\cal A|/\{\<A,A'\>~|~\exists(R\in\cal R[A,A'])\}\) and \(q\Vdash Q\) iff there exists an \(A\in Q\) such that \(q\Vdash A\).
    We can extend this to PER's and subsingletons using:
    \begin{proposition}
        If \(\cal A\in\sf{Assem}^n\) is isomorphic to a subsingleton/PER, then \(\cal A/\cal R\) is as well.
    \end{proposition}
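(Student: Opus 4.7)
The plan is to split on the two cases. Throughout, write $\sim$ for the relation $\{\<A, A'\> \mid \exists (R \in \cal R[A, A'])\}$ used to construct $\cal A/\cal R$, and assume it is an equivalence relation on $|\cal A|$, as this is what makes the quotient well-defined.

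For the subsingleton case, an isomorphism $\cal A \cong \cal S$ with $\cal S \subseteq \{*\}$ forces $|\cal A|$ to have at most one element, so $|\cal A|/{\sim}$ does as well. The target subsingleton will be $\cal S' \coloneqq \{* \mid \exists (A \in \cal A)\}$, and the map sending the unique equivalence class (if any) to $*$ furnishes the isomorphism $\cal A/\cal R \cong \cal S'$, with trackers inherited from the isomorphism $\cal A \cong \cal S$.

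For the PER case, suppose $\cal A \cong R$ for a PER $R$, so that $|\cal A| \cong \bb N/R$. Pull $\sim$ back along this bijection to obtain an equivalence relation $\sim'$ on $\bb N/R$, and then define $R' \subseteq \bb N \times \bb N$ by declaring $\<i, j\> \in R'$ iff $i, j \in \dom(R)$ and $[i]_R \sim' [j]_R$. Symmetry and transitivity of $R'$ follow immediately from those of $\sim'$, so $R'$ is a PER. By construction, $\bb N/R' \cong (\bb N/R)/{\sim'} \cong |\cal A|/{\sim} = |\cal A/\cal R|$, and the induced bijection can be tracked in both directions by composing with trackers for $\cal A \cong R$.

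The only non-routine point is to check that the canonical realizer relation for $R'$ --- where $q$ realizes $[n]_{R'}$ iff $\<q, n\> \in R'$ --- agrees with the realizability relation on $\cal A/\cal R$, under which $q \Vdash Q$ iff there exists $A \in Q$ with $q \Vdash_{\cal A} A$. This reduces to unwinding both definitions: a realizer for $[n]_{R'}$ is any element of this $R'$-class, equivalently any element of an $R$-class contained in the corresponding $\sim'$-class, which under the transport $\cal A \cong R$ is precisely a realizer of some $A \in Q$. I expect this bookkeeping to be the main obstacle, but not a genuine mathematical difficulty.
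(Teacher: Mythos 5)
Your proof is correct; note that the paper itself states this proposition without proof, so there is no argument of the authors' to compare against, and your construction (realizing the quotient as a PER $R'$ whose classes are unions of $R$-classes grouped by the pulled-back relation, with trackers inherited from the isomorphism $\cal A\cong R$, and the degenerate version of this for subsingletons) is the natural one. One small caveat: the paper's quotient rule does not require $\cal R$ to be an equivalence relation, so the relation $\{\<A,A'\>\mid\exists(R\in\cal R[A,A'])\}$ need not be one either; you should quotient by the equivalence relation it generates, which changes nothing in your argument since the generated relation pulls back and pushes forward just as well. Also, your closing remark that the realizer relations ``agree'' should be read as agreement up to composition with the trackers of $\cal A\cong R$ (i.e.\ an isomorphism of assemblies, not an equality of relations), which is what you in fact verify.
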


    \subsection{Interpretation}

    \subparagraph*{Model.}
    Now that we have all of the building blocks, we pack everything together to build our model.
    Using simultaneous induction on the derivation we define: \begin{itemize}
        \item for any well-formed context \(\Gamma\) an \(n\)-assembly \(\llb\Gamma\rrb\) for some \(n\);
        \item for any judgement \(\Gamma\vdash A:\sf{Type}\) a function \(\llb\Gamma\vdash A:\sf{Type}\rrb:\llb\Gamma\rrb\to\sf{Assem}^n\) for some \(n\);
        \item for any judgement \(\Gamma\vdash a:A\) a morphism \(\llb\Gamma\vdash a:A\rrb:\llb\Gamma\rrb\to\llb\Gamma\vdash A:\sf{Type}\rrb\).
    \end{itemize}
    Here contexts are the only part that we have not yet discussed.
    We define \(\llb\Gamma\rrb\) using \(\Sigma\): \begin{align*}
        \llb\rrb&\coloneqq\mathbf1, &
        \llb\Gamma,x:A\rrb&\coloneqq\Sigma(G\in\llb\Gamma\rrb)\,\llb\Gamma\vdash A:\Type\rrb(G).
    \end{align*}
    The other two judgements use the structure that we have defined in the previous sections: the embeddings \(\sf{Subsing}\hookrightarrow\sf{PER}\hookrightarrow\sf{Assem}^1\), the assemblies \(\nabla\sf{Subsing}\) and \(\nabla\sf{PER}\), and the constructions \(\mathbf n,\mathbf N,\Sigma,\Pi,\W,=,\|\cdot\|,/\).
    The full model is given in \autoref{app:model}.

    \subparagraph*{Realizability.}

    Now that we have defined a model for \(\lambda\rm C{+}\) within \(\rm{HAH}\), we consider the two interpretations \(\smash{\rm{HAH}\stackrel *\rightarrow\lambda\rm C{+}\to\rm{HAH}}\) given by \(*=\bullet,\openbullet\).
    The idea is as follows: for a formula \(A\), we get types \(\Gamma_A\vdash A^\bullet:\Prop\) and \(\Gamma_A\vdash A^\openbullet:\Set\), which gives us a subsingleton \(\llb A^\bullet\rrb\coloneqq\llb\Gamma_A\vdash A^\bullet:\Prop\rrb(G_A)\) and a PER \(\llb A^\openbullet\rrb\coloneqq\llb\Gamma_A\vdash A^\openbullet:\Set\rrb(G_A)\) for some canonical \(G_A\in\llb\Gamma_A\rrb\) with the same free variables as \(A\).
    If \(\llb A^\bullet\rrb\) or \(\llb A^\openbullet\rrb\) is inhabited, then the type has a term in the model, which means that the model satisfies the formula.
    So, we get HAH-formulas: \(*\in\llb A^\bullet\rrb\) and \(\exists z\,\<z,z\>\in\llb A^\openbullet\rrb\) with the same free variables as \(A\).
    
    To make this precise, we consider in which context \(A^\bullet\) and \(A^\openbullet\) are defined.
    If \(A\) has free variables \(x_0^{n_0},\dots,x_{k-1}^{n_{k-1}}\) then their context is \({\Gamma_A\coloneqq(x_0:\cal P^{n_0}\,\bb N,\dots,x_{k-1}:\cal P^{n_{k-1}}\,\bb N)}\).
    So \(\llb\Gamma_A\rrb=\llb\cal P^{n_0}\,\bb N\rrb\times\dots\times\llb\cal P^{n_{k-1}}\,\bb N\rrb\).
    If we write out the definition of \(\llb\cal P^n\,\bb N\rrb\), we see that it is defined by \(\llb\bb N\rrb\coloneqq(\bb N/{=})=\{\{x\}~|~x\in\bb N\}\) and \(\llb\cal P^{n+1}\,\bb N\rrb\coloneqq\llb\cal P^n\,\bb N\rrb\to\nabla\cal P(\{*\})\).
    We have canonical bijections \(g^n:\cal P^n(\bb N)\to\llb\cal P^n\,\bb N\rrb\): \begin{align*}
        g_0(x)&\coloneqq\{x\}, & g_{n+1}(X)&\coloneqq(f\in\llb\cal P^n\,\bb N\rrb)\mapsto\{*~|~g_n^{-1}(f)\in X\}, \\
        g_0^{-1}(\{x\})&\coloneqq x, & g_{n+1}^{-1}(\makesize FX)&\coloneqq\{x\in\cal P^n(\bb N)~|~*\in F(g_n(x))\}.
    \end{align*}
    Now we define \(G_A\coloneqq\<g_{n_0}(x_0),\dots,g_{n_{k-1}}(x_{k-1})\>\).

    \section{Proof-irrelevant Conservativity}\label{sec:irrelevant_conservativity}

    Now that we have defined our interpretations, the proof-relevant result follows quickly:
    
    \begin{theorem}\label{the:irrelevant_conservativity}
        For any \(\rm{HAH}\)-formula \(A\), we have \(\rm{HAH}\vdash *\in\llb A^\bullet\rrb\liff A\).
    \end{theorem}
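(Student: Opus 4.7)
I would prove this by induction on the complexity of \(A\). The starting point is the observation that, because \(A^\bullet\) always has type \(\Prop\) and \(\llb\Prop\rrb=\nabla\sf{Subsing}\), the term \(\llb A^\bullet\rrb\) is literally a subset of \(\{*\}\); hence ``\(*\in\llb A^\bullet\rrb\)'' is just the HAH-formula saying that this subsingleton is inhabited. The task therefore reduces to showing, by induction on \(A\), that this inhabitedness is HAH-equivalent to \(A\).

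For the atomic cases, the bijections \(g_n:\cal P^n(\bb N)\to\llb\cal P^n\,\bb N\rrb\) used to build \(G_A\) do the work. For \(x\in^{n}X\), unfolding \(g_{n+1}(X)\) at \(g_n(x)\) gives \(\llb X\,x\rrb=\{*\mid x\in X\}\), and truncating a subsingleton is the identity, so \(*\in\llb(x\in X)^\bullet\rrb\liff x\in X\). For \(X=^n Y\), the interpretation is \(\{*\mid g_n(X)=g_n(Y)\}\); by induction on \(n\), equality in \(\llb\cal P^n\,\bb N\rrb\) unfolds pointwise through the defining recursion of \(g_n\), and is equivalent to \(X=Y\) using HAH's extensionality axiom at each power sort.

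For the propositional connectives and the constants \(\bot,\top\), one unfolds the model interpretations on subsingletons: Cartesian product, function space and truncated disjoint union of subsets of \(\{*\}\) produce a subsingleton whose inhabitedness is, respectively, the conjunction, implication and disjunction of the inhabitednesses of the components, so the induction hypothesis yields the equivalence with the corresponding HAH connective. For \(\forall x^nB[x]\), the model interprets the formula as the \(\Pi\)-assembly of tracked dependent functions on \(\llb\cal P^n\,\bb N\rrb\) into the fibrewise subsingleton \(\llb B[x]^\bullet\rrb\); since each fibre contains at most one element and any such subsingleton-valued function admits a constant natural-number realizer, this \(\Pi\) is inhabited exactly when every fibre is. Combined with bijectivity of \(g_n\), this is, by the induction hypothesis, equivalent to \(\forall x\,B[x]\). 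The existential case is dual: the outer truncation of the \(\Sigma\)-assembly produces precisely the HAH-existential.

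The main obstacle I expect is the bookkeeping needed to apply the induction hypothesis inside the quantifier cases, namely showing that evaluating \(\llb B[x]^\bullet\rrb\) at \(g_n(X)\) agrees with \(\llb(B[X])^\bullet\rrb\). This is an instance of the standard substitution lemma for categorical models of dependent type theory, but it must be verified against the concrete construction in \autoref{app:model}; once it is in place, together with the bijectivity of each \(g_n\), the induction goes through uniformly across all clauses.
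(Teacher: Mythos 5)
Your proof is correct, but it takes a genuinely different and more laborious route than the paper's. The paper exploits the impredicative encodings set up in \autoref{subsec:logic}: since \(\bot,\top,\lor,\land,\exists\) and \(=^n\) are all definable from \(\in\), \(\to\) and \(\forall\) in higher-order logic, provably equivalently both in HAH and (after interpretation) in \(\lambda\rm C{+}\), the induction only has to verify three cases, namely \(x\in^nY\), \(A\to B\) and \(\forall x^nB[x]\) --- which are exactly the three computations you describe for those connectives. You instead verify every connective directly against the model (products, truncated sums, the equality subsingleton, the truncated \(\Sigma\)-assembly), including a separate induction on \(n\) for \(X=^nY\) via extensionality, which the paper avoids entirely by using the Leibniz encoding. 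Both arguments go through. Yours is longer but self-contained: it does not need the small extra observation that the model identifies the interpretation of a primitive connective with that of its encoding (which in the paper follows from propositional extensionality together with \(\lambda\rm C{+}\) proving the two logically equivalent). Moreover, your direct treatment of \(\lor\) and \(\exists\) is essentially what becomes unavoidable in the proof-relevant case of \autoref{sec:relevant_conservativity}, where the higher-order shortcut is unavailable. Your identification of the two genuine technical points --- that every element of \(\llb\cal P^n\,\bb N\rrb\) and every section of a family of subsingletons admits a trivial realizer, so the \(\Pi\)- and \(\Sigma\)-assemblies degenerate to their set-theoretic counterparts, and the substitution lemma needed to evaluate \(\llb B[x]^\bullet\rrb\) at \(g_n(X)\) --- matches where the real content of the paper's three displayed computations lies.
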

    \begin{proof}
        We prove this with induction on the formula \(A\).
        Because the other logical connectives can be defined using \(\in\), \(\to\), and \(\forall\), and because \(\lambda\)C\(+\) satisfies the rules and axioms of HAH, we only have to check the following cases: \begin{align*}
            *\in\llb(x\in^nY)^\bullet\rrb
            &\liff *\in\llb x:\cal P^n\,\bb N,Y:\cal P^{n+1}\,\bb N\vdash Y\,x:\sf{Prop}\rrb(g_n(x),g_{n+1}(Y)\>) \\
            &\liff*\in g_{n+1}(Y)(g_n(x)) \\
            &\liff g_n^{-1}(g_n(x))\in Y \\
            &\liff x\in^n Y, \displaybreak\\[2ex]
            *\in\llb(A\to B)^\bullet\rrb
            &\liff*\in\llb\Gamma_{A\to B}\vdash A^\bullet\to B^\bullet:\sf{Prop}\rrb(G_{A\to B}) \\
            &\liff*\in\Pi(h\in\llb\Gamma_A\vdash A^\bullet:\sf{Prop}\rrb(G_A))\,\llb\Gamma_B\vdash B^\bullet:\sf{Prop}\rrb(G_B) \\
            &\liff*\in\makesize[l]{\llb\Gamma_A\vdash A^\bullet:\sf{Prop}\rrb(G_A)\to *\in{}}{\Pi(h\in\llb\Gamma_A\vdash A^\bullet:\sf{Prop}\rrb(G_A))\,}\llb\Gamma_B\vdash B^\bullet:\sf{Prop}\rrb(G_B) \\
            &\liff A\to B, \\[2ex]
            *\in\llb(\forall x^n\,B[x])^\bullet\rrb
            &\liff*\in\llb\Gamma_{\forall x^nB[x]}\vdash\Pi(x:\cal P^n\,\bb N)\,B[x]^\bullet:\sf{Prop}\rrb(G_{\forall x^nB[x]}) \\
            &\liff*\in\Pi(f\in\llb\cal P^n\,\bb N\rrb)\,\llb\Gamma_{B[x]}\vdash B[x]^\bullet:\sf{Prop}\rrb(\<G_{\forall x^nB[x]},f\>) \\
            &\liff\makesize\forall\Pi(f\in\llb\cal P^n\,\bb N\rrb)\,*\in\llb\Gamma_{B[x]}\vdash B[x]^\bullet:\sf{Prop}\rrb(\<G_{\forall x^nB[x]},f\>) \\
            &\liff\makesize\forall\Pi(\makesize xf\in\makesize[r]{\cal P^n(\bb N)}{\llb\cal P^n\,\bb N\rrb})\,*\in\llb\Gamma_{B[x]}\vdash B[x]^\bullet:\sf{Prop}\rrb(\<G_{\forall x^nB[x]},g_n(x)\>) \\
            &\liff\forall x^n\,B[x]. \qedhere
        \end{align*}
    \end{proof}
    \begin{corollary}[proof-irrelevant conservativity]\label{cor:irrelevant_conservativity}
        For a higher-order arithmetical formula \(A\), we have that \(\rm{HAH}\) proves \(A\) iff there exists a term \(a\) such that \(\lambda\rm C{+}\) proves \(\Gamma_A\vdash a:A^\bullet\).
    \end{corollary}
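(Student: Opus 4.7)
The plan is to derive this corollary directly from \autoref{the:irrelevant_conservativity} combined with soundness of the two translations connecting HAH and \(\lambda\rm C{+}\). \autoref{the:irrelevant_conservativity} already does the semantic heavy lifting, showing that the round trip \(A \mapsto {*\in\llb A^\bullet\rrb}\) is equivalent to \(A\) over HAH; the corollary just packages this with the syntactic soundness of the two directions of translation.

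For the forward direction, I would verify that \((\cdot)^\bullet\) is a sound interpretation of HAH into \(\lambda\rm C{+}\), by induction on HAH-derivations. Each logical connective of HAH corresponds to a type constructor in \(\lambda\rm C{+}\) that lives in \(\Prop\): \(\Pi\) for \(\forall\) and \(\to\), \(\times\) for \(\land\), and truncations \(\|{-}\|\) of \(+\) and \(\Sigma\) for \(\lor\) and \(\exists\). The rules of intuitionistic higher-order logic (introduction and elimination for each connective, together with the rules for equality and membership) translate straightforwardly to the corresponding term constructors, using that \(\Prop\) is closed under arbitrary products and satisfies propositional extensionality, so that the comprehension and extensionality schemes are validated as in \autoref{subsec:interpreting_power_sets}. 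Thus any HAH-proof of \(A\) (with free variables matching \(\Gamma_A\)) yields a term \(a\) such that \(\Gamma_A \vdash a : A^\bullet\) is derivable in \(\lambda\rm C{+}\).

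For the converse direction, suppose \(\lambda\rm C{+} \vdash \Gamma_A \vdash a : A^\bullet\). The model of \autoref{sec:interpreting_type_theory}, constructed inside HAH, is sound: every derivable judgement \(\Gamma \vdash a : A\) yields in HAH a provable morphism \(\llb a \rrb : \llb \Gamma \rrb \to \llb A \rrb\). Evaluating at the canonical element \(G_A \in \llb \Gamma_A \rrb\) produces, provably in HAH, an inhabitant of \(\llb A^\bullet \rrb\). Since \(A^\bullet : \Prop\), this interpretation is a subsingleton and inhabitation is precisely the formula \(* \in \llb A^\bullet \rrb\). Applying \autoref{the:irrelevant_conservativity} then gives \(\rm{HAH} \vdash A\), completing the corollary.

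The main obstacle is not in the argument above, which is mostly bookkeeping once \autoref{the:irrelevant_conservativity} is in hand, but rather in the two soundness facts being invoked. Soundness of the proof-irrelevant translation is routine but requires explicitly producing witnessing terms for each HAH inference rule and axiom scheme; soundness of the model in HAH is the substantive content of \autoref{sec:interpreting_type_theory}, including the universe encodings \(\nabla\sf{Subsing}\) and \(\nabla\sf{PER}\), the impredicative closure of \(\Pi\), and the validation of propositional extensionality and equality reflection.
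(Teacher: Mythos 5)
Your proposal is correct and follows essentially the same route as the paper: the forward direction appeals to the soundness of the \((\cdot)^\bullet\) translation (i.e.\ that \(\lambda\rm C{+}\) validates the rules and axioms of HAH, as established in \autoref{sec:interpreting_arithmetic}), and the converse evaluates the model at \(G_A\) to obtain \(*\in\llb A^\bullet\rrb\) and then invokes \autoref{the:irrelevant_conservativity}. No discrepancies to report.
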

    \begin{proof}
        We have already seen that \(\lambda\)C\(+\) satisfies the axioms and inference rules of HAH, so it is an extension of HAH.
        That this extension is conservative will follow from the previous theorem.
        Suppose for a formula \(A\) in the language of HAH that it is provable in \(\lambda\)C\(+\), that is, that we have \(\Gamma_A\vdash a:A^\bullet\) for some term \(a\).
        Then we get \(\llb\Gamma_A\vdash a:A^\bullet\rrb(G_A)\in\llb\Gamma_A\vdash A^\bullet:\sf{Prop}\rrb(G_A)\) so we have \(*\in\llb A^\bullet\rrb\).
        Using the last theorem we see that \(A\) is provable in HAH.
    \end{proof}

    \section{Proof-relevant Conservativity}\label{sec:relevant_conservativity}

    The proof-relevant interpretation turns our to be more difficult.
    Here there exists a second-order formula that is not provable in HAH \cite{ChenRathjen2012}, but whose proof-relevant interpretation in type theory has a term \cite[Section 1.6]{HOTT}, namely the axiom of choice: \[
        \forall Z^1\,(\forall x^0\,\exists y^0\,\<x,y\>\in Z\to\exists F^1\,(\forall x^0\,\exists!y^0\,\<x,y\>\in F\land\forall x^0\,\forall y^0\,(\<x,y\>\in F\to \<x,y\>\in Z)).
    \]
    This means that our best hope is to prove conservativity only for first-order formulas.
    It turns our that this holds, but to prove it we have to modify our model and approach.
    
    In the proof of \autoref{the:irrelevant_conservativity}, we used the fact that, from second-order logic upwards, we can define every logical connective using \(\in\), \(\to\), and \(\forall\).
    Because our conservativity will only hold for first-order formulas, we cannot use this shortcut.
    It turns out that \(\lor\) and \(\exists\) are the difficult cases; luckily, in HA we can define \(A\lor B\coloneqq\exists n^0\,((n=0\to A)\land(n\ne0\to B))\) so we only have to worry about \(\exists\).
    Similarly, we can define \(\bot\coloneqq(0=1)\) and \(\top\coloneqq(0=0)\).
    First we write out what \(\<z,z'\>\in\llb A^\openbullet\rrb\) means:

    \begin{proposition}
        In \(\rm{HAH}\), we can prove the following: \begin{alignat*}{2}
            \<z,z'\>&\in\llb(\makesize{\makesize aA\makesize{=^0}{{}\to{}}\makesize bB}{\forall x^0B[x]})^\openbullet\rrb &
            &\liff a=^0b, \\
            \<z,z'\>&\in\llb(\makesize{A\makesize\land{{}\to{}} B}{\forall x^0B[x]})^\openbullet\rrb &
            &\liff\makesize[r]{\<\sf{pr}_0\,z,\hspace{2pt}\sf{pr}_0\,z'\>}{\forall x,x'\,(\<x,x'\>}\in\llb A^\openbullet\rrb\makesize\land{{}\to{}}\<\sf{pr}_1\,z,\hspace{2pt}\sf{pr}_1\,z'\>\in\llb B^\openbullet\rrb, \\
            \<z,z'\>&\in\llb(\makesize{A\to B}{\forall x^0B[x]})^\openbullet\rrb &
            &\liff\forall x,x'\,(\<x,x'\>\in\llb A^\openbullet\rrb\to\<z\,x,z'\,x'\>\in\llb B^\openbullet\rrb), \\
            \<z,z'\>&\in\llb(\makesize\exists\forall x^0B[x])^\openbullet\rrb & &\liff\sf{pr}_0\,z=\sf{pr}_0\,z'\land \<\sf{pr}_1\,z,\sf{pr}_1\,z'\>\in\llb B[\sf{pr}_0\,z]^\openbullet\rrb, \\
            \<z,z'\>&\in\llb(\forall x^0B[x])^\openbullet\rrb &
            &\liff\forall x\,(\<z\,x,z'\,x\>\in\llb B[x]^\openbullet\rrb).
        \end{alignat*}
    \end{proposition}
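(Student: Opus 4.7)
The plan is to prove each of the five equivalences by direct unfolding of the model's interpretation of the corresponding type constructor, relying on the definitions of $\Sigma,\Pi,=$, and the embeddings $\sf{Subsing}\hookrightarrow\sf{PER}\hookrightarrow\sf{Assem}$ developed in \autoref{sec:interpreting_type_theory}. I would not proceed by induction on $A$ here since the statement is only about the top-level connective; rather, I would handle each case separately, using the substitution and context-extension lemmas of the model to manage the parameter contexts $\Gamma_A$ and canonical elements $G_A$.

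For the equality case, I would unfold $(a=^0 b)^\openbullet$ as the equality type $a=_{\bb N}b$ in $\lambda\rm C{+}$. The model interprets this as the subsingleton $\{*\mid g_0(a)=g_0(b)\}$, which collapses to $\{*\mid a=b\}$ after applying the definition of $g_0$. Viewing this subsingleton as a PER via the full embedding $\sf{Subsing}\hookrightarrow\sf{PER}$, the relation becomes $\{\<i,j\>\mid a=b\}$, so $\<z,z'\>$ lies in it iff $a=^0b$.

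For the cases of $A\land B$, $A\to B$, $\exists x^0\,B[x]$, and $\forall x^0\,B[x]$, I would unfold the interpretations as the non-dependent $\Sigma$, non-dependent $\Pi$, dependent $\Sigma$ over $\mathbf N$, and dependent $\Pi$ over $\mathbf N$ respectively, and then read off the PER realizability conditions from the clauses $(\Sigma\W)$ and $(\Pi\W)$ specialized to PERs. For the $\Sigma$-cases this gives componentwise relatedness of the projections; for the $\Pi$-cases this gives that related inputs produce related outputs. The crucial simplification in the existential and universal cases is that $\llb\bb N\rrb=\mathbf N$ is the PER of equality on $\bb N$, so the requirement ``for all related $\<x,x'\>$'' collapses to ``for all $x$'', and in the existential case forces $\sf{pr}_0\,z=\sf{pr}_0\,z'$.

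I do not expect any serious obstacle; the proof is essentially a bookkeeping exercise. The one point requiring care is that the right-hand-side expression $\llb B[x]^\openbullet\rrb$ must be understood as the PER obtained by instantiating the free variable of $B$ with $g_0(x)\in\llb\bb N\rrb$ inside the assembly $\llb\Gamma_{\forall x^0B[x]},x:\bb N\vdash B[x]^\openbullet:\Set\rrb$; verifying that this coincides with the naive reading is a routine application of the substitution lemma for the model, analogous to the last line of the proof of \autoref{the:irrelevant_conservativity}.
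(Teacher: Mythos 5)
Your proposal is correct and matches the intended argument: the paper gives no explicit proof for this proposition, treating it as a direct unfolding of the model's clauses for $=,\Sigma,\Pi$ over the PER $\mathbf N$ and the embedding of subsingletons into PERs, which is precisely what you do case by case. Your attention to the substitution issue for $\llb B[x]^\openbullet\rrb$ and to the collapse of ``related inputs'' to ``equal inputs'' over $\mathbf N$ covers the only points where care is needed.
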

    Now, we will prove conservativity by using one extra assumption: that we have Hilbert-style epsilon constants.
    That is, we assume for every first-order formula \(A[x_0,\dots,x_{n-1},y]\), that there exists some \(\epsilon_{y.A}\in\bb N\) such that:
    \begin{align*}
        \forall\vec x\,(\exists y\,A[\vec x,y]\to\epsilon_{y.A}\,\vec x\den), &&
        \forall\vec x\,({\epsilon_{y.A}\,\vec x\den}\to A[\vec x,\epsilon_{y.A}\,\vec x]).
    \end{align*}
    Unfortunately, this assumption is not true for Kleene-application; however, in the next sections, we will see that we can conservatively extend HAH with a notion of application where these constants exist.
    First, we show how this allows us to prove conservativity:
    \begin{theorem}\label{the:relevant_conservativity}
        Assuming \(\epsilon\)-constants, for any \(\rm{HA}\)-formula \(A\), we have \(\exists z\,\<z,z\>\in\llb A^\openbullet\rrb\liff A\).
    \end{theorem}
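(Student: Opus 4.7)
The plan is to induct on the HA-formula $A$, invoking the preceding proposition to unfold $\<z,z\>\in\llb A^\openbullet\rrb$. Since $\bot$, $\top$, and $\lor$ are HA-definable from $=$, $\to$, $\land$, and $\exists$ (as observed in the paragraph preceding the theorem), it suffices to treat atomic equalities, $\land$, $\to$, $\forall$, and $\exists$. Write $\phi_C(z)\coloneqq\<z,z\>\in\llb C^\openbullet\rrb$ throughout, so the goal on $A$ is $(\exists z\,\phi_A(z))\liff A$.

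The forward direction is routine: $a=^0b$ reduces directly to $a=b$; $\land$ splits via $\sf{pr}_0,\sf{pr}_1$ and applies the IH componentwise; for $\exists$ the first projection $\sf{pr}_0\,z$ supplies the numerical witness and the second component becomes $B[\sf{pr}_0\,z]$ by IH; for $\forall$ and $\to$ one feeds $z$ an IH-provided realizer at the right point and reads off the conclusion by IH.

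For the backward direction, the atomic and $\land$ cases are immediate (take $z=0$, or pair two IH-given realizers). For $\forall x^0\,B[x]$: from $\forall x\,B[x]$ the IH gives $\forall x\,\exists y\,\phi_{B[x]}(y)$, so the $\epsilon$-constant $z\coloneqq\epsilon_{y.\phi_{B[x]}(y)}$ (with $x$ as parameter) is a function satisfying $\phi_{B[x]}(z\,x)$ for every $x$ by the $\epsilon$-axiom. For $\exists x^0\,B[x]$: $\epsilon$-pick a witness $x_0\coloneqq\epsilon_{x.B[x]}$, apply IH to $B[x_0]$ to get $\exists y\,\phi_{B[x_0]}(y)$, then $\epsilon$-pick $y_0\coloneqq\epsilon_{y.\phi_{B[x_0]}(y)}$, and take $z\coloneqq\<x_0,y_0\>$; the unfolding in the proposition confirms $\phi_{\exists x\,B[x]}(z)$.

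The $\to$ case is the main obstacle. From $A\to B$ and IH, $(\exists x\,\phi_A(x))\to(\exists y\,\phi_B(y))$, and we need a single $z$ with $\forall x,x'\,(\<x,x'\>\in\llb A^\openbullet\rrb\to\<z\,x,z\,x'\>\in\llb B^\openbullet\rrb)$. My approach is to take $z$ to be (the combinator coding) the constant function $x\mapsto\epsilon_{y.\phi_B(y)}$. Symmetry and transitivity of the PER $\llb A^\openbullet\rrb$ give $\phi_A(x)$ from any $\<x,x'\>\in\llb A^\openbullet\rrb$, hence $A$, hence $B$ by assumption, hence $\exists y\,\phi_B(y)$; the $\epsilon$-axiom then makes the common value $c\coloneqq z\,x=z\,x'$ defined and a realizer of $B$, so $\<c,c\>\in\llb B^\openbullet\rrb$ as required. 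Without $\epsilon$-constants, extracting a single uniform $z$ from $(\exists x\,\phi_A)\to(\exists y\,\phi_B)$ would require an independence-of-premise principle unavailable in intuitionistic HA; this is precisely what the $\epsilon$-constants (and the partial-term machinery of the subsequent sections) are designed to supply, and it is the reason the theorem is stated under that assumption.
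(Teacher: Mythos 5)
Your overall strategy is sound, and several of its key moves coincide with the paper's own proof: the forward direction, the use of symmetry and transitivity of the PER \(\llb A^\openbullet\rrb\) to recover \(\exists u\,\<u,u\>\in\llb A^\openbullet\rrb\) from any \(\<x,x'\>\in\llb A^\openbullet\rrb\) in the \(\to\) case, and the observation that the realizer of an implication may ignore its input. The genuine gap is \emph{which} formulas you apply the \(\epsilon\)-constants to. In the \(\to\), \(\forall\), and (second half of the) \(\exists\) cases you invoke \(\epsilon_{y.\phi_C(y)}\) where \(\phi_C(y)\) is \(\<y,y\>\in\llb C^\openbullet\rrb\). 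But once the model is rebuilt over the primitive application of \(\rm{HAHP}\) --- which is forced on us precisely because Kleene application admits no \(\epsilon\)-constants --- \(\phi_C\) is an \(\rm{HAHP}\)-formula containing \(\sf{app}\); it is not an \(\rm{HA}\)- or \(\rm{HAH}\)-formula. The theory \(\rm{HAHP}\epsilon\) of Section 7.3 supplies \(\epsilon\)-constants only for \(\rm{HAH}\)-formulas, and its conservativity proof genuinely needs this restriction: in the step from \(\rm{HAHP}f\) to \(\rm{HAHP}\sf c\), the defining formula \(A[x,y]\) must satisfy \(A^f=A\) under relativization of application to the oracle, and a defining formula mentioning \(\sf{app}\) would itself be relativized, making the construction circular. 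So your argument rests on an assumption strictly stronger than the one the theorem (as discharged by \autoref{the:HAHPe}) provides, and the proof-relevant conservativity corollary would not follow from it.

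The repair is exactly the paper's proof: instead of applying \(\epsilon\) to the realizability predicates, build a canonical realizer \(r_C\) by recursion on the \(\rm{HA}\)-formula \(C\) --- for instance \(r_{a=b}\coloneqq\lambda\vec x\,0\), \(r_{A\to B}\coloneqq\lambda\vec x\,\lambda y\,(r_B\,\vec x)\), and \(r_{\exists y\,B}\coloneqq\lambda\vec x\,\<\epsilon_{y.B}\,\vec x,\,r_B\,\vec x\,(\epsilon_{y.B}\,\vec x)\>\) --- so that \(\epsilon\) is only ever applied to first-order arithmetical subformulas \(B\) of \(A\), which is precisely what \(\rm{HAHP}\epsilon\) provides. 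One then strengthens the induction hypothesis to the three-way equivalence \(\exists z\,\<z,z\>\in\llb A^\openbullet\rrb\liff({r_A\,\vec x\den}\land\<r_A\,\vec x,r_A\,\vec x\>\in\llb A^\openbullet\rrb)\liff A\); your PER argument for \(\to\) then goes through verbatim with \(r_B\,\vec x\) in place of \(\epsilon_{y.\phi_B}\), and the rest of your case analysis survives the same substitution.
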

    \begin{proof}
        For any HA-formula \(A\) with free variables \(\vec x\), we construct a canonical realizer \(r_A\): \begin{alignat*}{2}
            &r_{a=^0b} & &\coloneqq\lambda\vec x\,0, \\
            &r_{A\land B} & &\coloneqq\lambda\vec x\,\<r_A\,\vec x,r_B\,\vec x\>, \\
            &r_{A\to B} & &\coloneqq\lambda\vec x\,\lambda y\,(r_B\,\vec x), \\
            &r_{\exists y^0\,B[y]} & &\coloneqq\lambda\vec x\,\<\epsilon_{y.B}\,\vec x,r_B\,\vec x\,(\epsilon_{y.B}\,\vec x)\>, \\
            &r_{\forall y^0\,B[y]} & &\coloneqq\lambda\vec x\,\lambda y\,(r_B\,\vec x\,y).
        \end{alignat*} 
        With induction on \(A\), we can prove \(\exists z\,\<z,z\>\in\llb A^\openbullet\rrb\liff({r_A\,\vec x\den}\land\<r_A\,\vec x,r_A\,\vec x\>\in\llb A^\openbullet\rrb)\liff A\).
    \end{proof}

    What remains is proving that we can make this assumption.
    Here, we translate the approach of \cite{VanDenBergVanSlooten2018} to higher-order logic.
    First, in \autoref{subsec:partial_logic} we extend our higher-order logic to allow for partial function symbols.
    Then in \autoref{subsec:HAHP} we extend HAH to HAHP by adding primitive notions for application.
    This extension is conservative because these notions can already be defined using Kleene-application.
    Then in \autoref{subsec:HAHPe}, we extend further, to HAHP\(\epsilon\) by adding \(\epsilon\)-constants, and show that this is still conservative over HAH.

    \subsection{Higher-order Logic of Partial Terms}\label{subsec:partial_logic}

    We will consider a higher-order version of the logic of partial terms by Beeson \cite[Section VI.1]{Beeson1985}.
    In this logic, function symbols are allowed to correspond to partial functions.
    So, if we have a function symbol \(f\) then \(f(\vec x)\) is not necessarily defined.
    For every term \(a\) we add a new atomic formula \(a\,\den\), which stands for `\(a\) is defined'.
    We add the following inference rules: \begin{flalign*}
        \prftree[r]{\(\downarrow\)-var,}
            {\Gamma\vdash x^n\den,} &&
        \prftree[r]{\(\downarrow\)-fun,}
            {\Gamma\vdash f(a_0^{n_0},\dots,a_{k-1}^{n_{k-1}})\den}
            {\Gamma\vdash a_i^{n_i}\den} &&
        \prftree[r]{\(\downarrow\)-rel.}
            {\Gamma\vdash R(a_0^{n_0},\dots,a_{k-1}^{n_{k-1}})}
            {\Gamma\vdash a_i^{n_i}\den}
    \end{flalign*}
    Note that we view \(=^n\) and \(\in^n\) as relation symbols, so the \(\downarrow\)-rel rule applies.
    In addition, we modify the exists introduction and forall elimination rules: \begin{flalign*}
        \prftree[r]{\(\exists\)-I,}
            {\Gamma\vdash B[a^n]}
            {\Gamma\vdash a^n\den}
            {\Gamma\vdash\exists x^n\,B[x^n]} &&
        \prftree[r]{\(\forall\)-E.}
            {\Gamma\vdash\forall x^n\,B[x^n]}
            {\Gamma\vdash a^n\den}
            {\Gamma\vdash B[a^n]}
    \end{flalign*}
    The other rules are the same as those of higher-order logic.
    In this logic, is often useful to consider a weaker notion of equality that also holds for terms that are not defined: \(a^n\simeq b^n\coloneqq a\den\lor b\den\to a=b\).
    
    Any theory in higher-order logic can be seen as a theory in the higher-order logic of partial terms by adding for every function symbol \(f:(n^0,\dots,n^k)\to m\) the axiom \(\forall x_0^{n_0}\,\dots\,\forall x_k^{n^k}\,f(x_0,\dots,x_k)\den\).
    Accordingly, we will view HAH as a theory in this new logic.
    
    \subsection{HAHP: Adding Primitive Application}\label{subsec:HAHP}

    In HAHP, we extend the language with a binary partial function symbol \(\sf{app}:(0,0)\rightharpoonup0\), and constants \(\sf k,\sf s,\sf{suc},\sf{rec}:0\) which stand for natural numbers encoding basic functions: \(\sf k\) and \(\sf s\) give a partial combinatory algebra \cite{Bethke1988}, \(\sf{suc}\) computes the successor function, and \(\sf{rec}\) allows us to do recursion.
    We abbreviate \(\sf{app}(a,b)\) as \(a\,b\).
    For \(\sf k,\sf s,\sf{suc},\sf{rec}\), we add the axioms: \begin{align*}
        &\forall x\,\forall y\,(\sf k\,x\,y=x), &
        &\forall x\,\sf{suc}\,x=\sf S(x), \\
        &\forall x\,\forall y\,(\sf s\,x\,y\den), &
        &\forall x\,\forall y\,(\sf{rec}\,x\,y\,0=x), \\
        &\forall x\,\forall y\,\forall z\,(\sf s\,x\,y\,z\simeq(x\,z)\,(y\,z)), &
        &\forall x\,\forall y\,\forall z\,(\sf{rec}\,x\,y\,(\sf S\,z)\simeq y\,z\,(\sf{rec}\,x\,y\,z)).
    \end{align*}
    The reason for \(\sf k\) and \(\sf s\) is that they are used to define lambda abstraction: \begin{align*}
        \lambda x\,x&\coloneqq\sf s\,\sf k\,\sf k, &
        \lambda x\,\sf S(b[x])&\coloneqq\lambda x\,(\makesize{\sf{suc}}{\sf{add}}\,b[x]), \\
        \lambda x\,\makesize cx&\coloneqq\sf k\,c,\qquad(\text{if \(c\ne x\)}) &
        \lambda x\,b[x]+c[x]&\coloneqq\lambda x\,(\sf{add}\,b[x]\,c[x]), \\
        \lambda x\,(b[x]\,c[x])&\coloneqq\sf s\,(\lambda x\,b[x]) \,(\lambda x\,b[x]), &
        \lambda x\,b[x]\times c[x]&\coloneqq\lambda x\,(\makesize{\sf{mul}}{\sf{add}}\,b[x]\,c[x]),
    \end{align*} where \(\sf{add}\coloneqq\lambda y\,\sf{rec}\,\makesize y1\,(\lambda i\,\lambda r\,\makesize{\sf{suc}}{\sf{add}}\,r)\) and \(\sf{mul}\coloneqq\lambda y\,\sf{rec}\,1\,(\lambda i\,\lambda r\,\sf{add}\,r\,y)\).
    
    These lambda functions are enough to construct our model in HAHP using \(\sf{app}\) as our application.
    From now on, we will view \(*\in\llb A^\bullet\rrb\) and \(\exists z\,\<z,z\>\in\llb A^\openbullet\rrb\) as HAHP-formulas.

    \begin{theorem}
        \(\rm{HAHP}\) is conservative over \(\rm{HAH}\).
    \end{theorem}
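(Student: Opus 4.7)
The plan is to interpret HAHP into HAH by translating $\sf{app}$ as Kleene-application. Within HAH one can define a primitive recursive ternary arithmetical relation $\sf{Kl}(e,n,m)$ expressing that the $e$-th partial computable function is defined on input $n$ with value $m$, using the tupling and recursion-theoretic machinery described in Section 2. The goal is to translate every HAHP-formula $\varphi$ into an HAH-formula $\varphi^\circ$ that `unfolds' each application subterm into an existential witness constrained by $\sf{Kl}$, and then to show that HAHP-provability of $\varphi$ entails HAH-provability of $\varphi^\circ$.

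First I would set up the translation. Simultaneously by induction on HAHP-terms $t$, I define an HAH-formula $t \den^\circ\! x$ (in a fresh variable $x$) meaning `$t$ is defined with value $x$'. Variables and the original arithmetical function symbols of HAH translate in the obvious total way. For an application term $\sf{app}(a,b)$, one sets
\[
    \sf{app}(a,b) \den^\circ\! x \;\coloneqq\; \exists u\,\exists v\,\bigl(a\den^\circ\! u \land b\den^\circ\! v \land \sf{Kl}(u,v,x)\bigr).
\]
Atomic HAHP-formulas and the predicate $t\den$ translate by introducing existential witnesses for each subterm; connectives and quantifiers commute with the translation. Crucially, variables of positive sort and the quantifiers over higher sorts are left untouched: the partiality is entirely confined to sort $0$.

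Next I would exhibit explicit natural numbers interpreting $\sf k, \sf s, \sf{suc}, \sf{rec}$, using standard codes coming from the s-m-n theorem and the recursion theorem in classical recursion theory. The verification that each HAHP-axiom for these constants translates to an HAH-theorem then reduces to concrete computations with $\sf{Kl}$ that are already part of the standard formalisation of partial computable functions inside HA, hence inside HAH \cite{TroelstraVanDalen1988I}. The inference rules of the logic of partial terms from Section 6.1 translate to admissible HAH-principles: the definedness premises in $\den$-fun, $\exists$-I and $\forall$-E match exactly the existential witnesses introduced by the translation, and the original HAH inference rules are preserved because every HAH-term is translated as a total term (so $t\den^\circ\! x \liff x = t$ for HAH-terms $t$).

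Conservativity follows immediately: when $\varphi$ is a pure HAH-formula, HAH proves $\varphi^\circ \liff \varphi$, so if HAHP proves $\varphi$ then HAH proves $\varphi^\circ$, hence $\varphi$. The main obstacle is bookkeeping: one must ensure the translation commutes correctly with substitution, since translating a term inside a formula introduces fresh existential variables whose scoping interacts with the quantifier rules. This is handled as in Beeson's treatment of the logic of partial terms \cite{Beeson1985}, and the move to higher-order logic introduces no new difficulty since all of the new machinery lives at sort $0$.
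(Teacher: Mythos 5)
Your proposal is correct and takes essentially the same approach as the paper: the paper's proof simply observes that Kleene-application (definable in HAH) satisfies the axioms of HAHP, citing Troelstra--van Dalen for the details, and your translation via a $\mathsf{Kl}$-relation with existential definedness witnesses is precisely the standard way of spelling out that interpretation in the logic of partial terms.
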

    \begin{proof}
        Kleene-application satisfies the axioms of HAHP.
        Here, \(\sf{app}(a,b)\) is the application of the partial recursive function encoded by \(a\) to \(b\).
        See \cite[Proposition 9.3.12]{TroelstraVanDalen1988II} for more.
    \end{proof}

    \subsection{HAHP\texorpdfstring{\(\bm\epsilon\)}{ε}: Adding Computable Choice}\label{subsec:HAHPe}
    In HAHP\(\epsilon\), we extend the theory even further by adding a constant \(\epsilon_{\exists yA}:0\) for every HAH-formula \(A[x_0^0,\dots,x_{k-1}^0,y^0]\), and adding the following axioms: \begin{align*}
        \forall\vec x\,(\exists y\,A[\vec x,y]\to{\epsilon_{\exists y\,A}\,\vec x\den}), &&
        \forall\vec x\,({\epsilon_{\exists y\,A}\,\vec x\den}\to A[\vec x,\epsilon_{\exists y\,A}\,\vec x]).
    \end{align*}
    These constants can be thought of as a choice function that miraculously choose a value that satisfies the formula if such a value exists.
    Such functions do not exist for Kleene-application, so HAHP\(\epsilon\) is not conservative over HAHP.
    However, perhaps surprisingly, the theory is conservative over HAH as we will prove in the remainder of this subsection.
    \begin{proposition}\label{the:relation_symbol}
        Suppose that \(A[x^0,y^0]\) is an \(\rm{HAH}\)-formula and let \(\rm{HAHP}R\) be the extension of \(\rm{HAHP}\) with a relation symbol \(R:(0,0)\) and axioms: \begin{align*}
            \forall x\,!y\,R(x,y), &&
            \forall x\,(\exists y\,A[x,y]\to\exists y\,R(x,y)), &&
            \forall x\,\forall y\,(R(x,y)\to A[x,y]),
        \end{align*} where \(!y\) means `at most one \(y\)' which is defined by \(!y\,B[y]\coloneqq\forall y\,\forall y'\,(B[y]\land B[y']\to y=y')\).
        Then \(\rm{HAHP}R\) is conservative over \(\rm{HAHP}\).
    \end{proposition}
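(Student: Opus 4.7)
My plan is to prove conservativity semantically: every model of \(\rm{HAHP}\) can be expanded to a model of \(\rm{HAHPR}\) in such a way that truth values of \(\rm{HAHP}\)-formulas are preserved. I would proceed by contraposition.

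Suppose \(\phi\) is an \(\rm{HAHP}\)-sentence with \(\rm{HAHP}\not\vdash\phi\). By completeness of \(\rm{HAHP}\) with respect to a suitable class of semantic models --- for instance, full Heyting-valued or Kripke-sheaf models supporting higher-order comprehension --- fix a model \(\mathcal{M}\models\rm{HAHP}\) with \(\mathcal{M}\not\models\phi\). The crucial step is then to define an interpretation \(R^+\) of the new relation symbol on \(\mathcal{M}\) making the expansion \(\mathcal{M}^+\) a model of \(\rm{HAHPR}\). Working in a classical metatheory with choice, for each \(x\) in the domain, if \(\mathcal{M}\models\exists y\,A[x,y]\) I use meta-choice to pick a witness \(y_x\) and set \(R^+(x,y)\) to hold iff \(y=y_x\); otherwise I let \(R^+(x,-)\) be empty. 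By construction this \(R^+\) satisfies the three new axioms --- functionality, existence, and soundness --- while the \(\rm{HAHP}\)-axioms continue to hold because they do not mention \(R\). Since \(\phi\) itself does not mention \(R\) and its truth value is unchanged in the expansion, we obtain \(\mathcal{M}^+\not\models\phi\) and therefore \(\rm{HAHPR}\not\vdash\phi\).

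The main obstacle is verifying that \(\mathcal{M}^+\) really is a model of \emph{all} of \(\rm{HAHPR}\), in particular that the comprehension scheme extended to formulas mentioning \(R\) still holds. For fully standard semantic models this is automatic, since every externally definable subset of the domain lies in the full power set; but for more refined semantic models it is not. A cleaner and more uniform alternative, closer in spirit to the strategy adapted from \cite{VanDenBergVanSlooten2018}, would be a forcing or sheaf construction over the poset of finite partial approximations to \(R\) ordered by extension: this adjoins a generic choice function to any base model, and one can then argue directly that the truth of \(\rm{HAHP}\)-formulas is preserved by the passage to the sheaf topos, thereby avoiding any appeal to external fullness of the underlying semantics.
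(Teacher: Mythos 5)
Your primary argument has a genuine gap that you yourself half-identify but do not resolve. The two requirements of your model-theoretic strategy pull in opposite directions: to get a model \(\mathcal M\not\models\phi\) from \(\rm{HAHP}\not\vdash\phi\) you must invoke completeness, which for an intuitionistic higher-order theory holds only for \emph{general} (Henkin-style Kripke or Heyting-valued) models whose higher sorts need not be full power sets; but to verify that the expansion \(\mathcal M^+\) satisfies comprehension and induction for formulas \emph{mentioning} \(R\) you need exactly the fullness that such models lack. The set \(\{x\mid P[x,R]\}\) defined by meta-choice has no reason to live in the (non-full) second-order domain of \(\mathcal M\), so \(\mathcal M^+\) need not model \(\rm{HAHP}R\), and the contraposition collapses. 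There is a further unaddressed issue specific to intuitionistic semantics: in a Kripke model the interpretation of \(R\) must be persistent across nodes, and since \(\exists y\,A[x,y]\) may become true only at later nodes (and for newly appearing \(x\)), your pointwise meta-choice must be organised coherently along the partial order --- this is not automatic and is precisely where the finite-approximation structure becomes essential rather than optional.

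The ``cleaner alternative'' you mention in passing is in fact the only working route, and it is what the paper does --- but syntactically rather than via sheaf semantics, which sidesteps completeness entirely. One defines, for each finite approximation \(P=\{\langle x_0,y_0\rangle,\dots,\langle x_{n-1},y_{n-1}\rangle\}\) with the \(x_i\) distinct and \(A[x_i,y_i]\), an \(\rm{HAHP}\)-formula \(P\Vdash_R C\) by recursion on the \(\rm{HAHP}R\)-formula \(C\) (with the double-negation-style clauses for \(R\), \(\lor\), \(\exists\)), proves monotonicity and the density lemma, shows \(P\Vdash_R B\liff B\) for \(R\)-free \(B\), and then shows by induction on derivations that \(\rm{HAHP}R\vdash C\) implies \(\rm{HAHP}\vdash\forall P\,(P\Vdash_R C)\). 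Comprehension and induction for \(R\)-formulas are then unproblematic because \(P\Vdash_R C\) is itself an \(\rm{HAHP}\)-formula to which the base theory's schemes apply. Your proposal names this strategy but does not carry out any of these steps, so as it stands it is not a proof.
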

    \begin{proof}
        We prove this using forcing.
        We define a forcing condition \(P\) to be a finite approximation of the relation \(R\): a finite set of pairs \(\{\<x_0,y_0\>,\dots,\<x_{n-1},y_{n-1}\>\}\) where the \(x_i\) are distinct and for every \(i<n\) we have \(A(x_i,y_i)\).
        For a forcing condition \(P\) and an \(\rm{HAHP}R\)-formula \(A\) we define a \(\rm{HAHP}\)-formula \(P\Vdash_R A\) with induction on \(A\): \begin{alignat*}{2}
            &P\Vdash_R A & {}&\coloneqq A, \tag{if\;\(A\)\;is\;an\;atomic\;HAHP-formula} \\
            &P\Vdash_R R(x,y) & {}&\coloneqq\forall(P'\supseteq P)\,\exists(P''\supseteq P')\,(\<x,y\>\in P''), \\
            &P\Vdash_R A\lor B & {}&\coloneqq\forall(P'\supseteq P)\,\exists(P''\supseteq P')\,((P''\Vdash_RA)\lor(P''\Vdash_R B)), \\
            &P\Vdash_R A\land B & {}&\coloneqq(P\Vdash_R A)\land(P\Vdash_R B), \\
            &P\Vdash_R A\to B & {}&\coloneqq\forall(P'\supseteq P)\,((P'\Vdash_R A)\to(P'\Vdash_R B)), \\
            &P\Vdash_R\exists x^n\,B[x] & {}&\coloneqq\forall(P'\supseteq P)\,\exists(P''\supseteq P')\,\exists x^n\,(P''\Vdash_R B[x^n]), \\
            &P\Vdash_R\forall x^n\,B[x] & {}&\coloneqq\forall(P'\supseteq P)\,\forall x^n\,(P'\Vdash_RB[x]).
        \end{alignat*}
        As usual, we can prove with induction for every \(\rm{HAHP}R\)-formula \(A\) that we have: \begin{align*}
            &\rm{HAHP}\vdash\forall P\,\forall(P'\supseteq P)\,((P\Vdash_R A)\to(P'\Vdash_R A)), \\
            &\rm{HAHP}\vdash\forall P\,(\forall(P'\supseteq P)\,\exists(P''\supseteq P')\,(P''\Vdash_R A)\to(P\Vdash_R A)),
        \intertext{and with induction for every \(\rm{HAHP}\)-formula \(B\) that we have:}
            &\rm{HAHP}\vdash\forall P\,((P\Vdash_R B)\liff B).
        \end{align*}
        Using this, we show for every \(\rm{HAHP}R\)-formula \(A\) that \(\rm{HAHP}R\vdash A\) implies \(\sf{HAHP}\vdash\forall P\,(P\Vdash_RA)\) with induction on the proof of \(\sf{HAHP}R\vdash A\).
        This is tedious but straightforward.
        See also the proof of \cite[Proposition 2.5]{VanDenBergVanSlooten2018} where they show a similar statement.
        
        This shows that \(\sf{HAHP}R\) is conservative over \(\sf{HAHP}\): suppose for a \(\sf{HAHP}\)-formula \(B\) that we have \(\sf{HAHP}R\vdash B\), then we have \(\sf{HAHP}\vdash\forall P\,(P\Vdash_RB)\) and therefore \(\sf{HAHP}\vdash B\).
    \end{proof}
    \begin{proposition}
        Suppose that \(A[x^0,y^0]\) is an \(\rm{HAH}\)-formula and let \(\rm{HAHP}f\) be the extension of \(\rm{HAHP}\) with a partial function symbol \(f:0\rightharpoonup0\) and axioms: \begin{align*}
            \forall x\,(\exists y\,A[x,y]\to f(x)\den), &&
            \forall x\,(f(x)\den\to A[x,f(x)]).
        \end{align*}
        Then \(\rm{HAHP}f\) is conservative over \(\rm{HAHP}\).
    \end{proposition}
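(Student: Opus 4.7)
The plan is to reduce this to \autoref{the:relation_symbol}. First, apply that proposition to the same \(A[x,y]\) to obtain a conservative extension \(\rm{HAHP}R\) of \(\rm{HAHP}\) containing a relation symbol \(R\) with \(\forall x\,!y\,R(x,y)\), \(\forall x\,(\exists y\,A[x,y]\to\exists y\,R(x,y))\), and \(\forall x\,\forall y\,(R(x,y)\to A[x,y])\). It then suffices to show that \(\rm{HAHP}f\) is conservative over \(\rm{HAHP}R\), since the latter is conservative over \(\rm{HAHP}\).

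I would then define an eliminative translation \((-)^\circ\) from \(\rm{HAHP}f\)-formulas to \(\rm{HAHP}R\)-formulas that interprets the partial function symbol \(f\) using the single-valued relation \(R\). Associate to each \(\rm{HAHP}f\)-term \(t\) and each fresh variable \(u\) a \(\rm{HAHP}R\)-formula \(t\simeq^\circ u\), read as ``\(t\) is defined with value \(u\)'', by induction on \(t\): variables and constants translate via equality, each \(\rm{HAHP}\)-function symbol \(g\) translates via \(g(t_1,\dots,t_k)\simeq^\circ u \coloneqq \exists y_1\,\dots\,\exists y_k\,(t_1\simeq^\circ y_1\land\dots\land t_k\simeq^\circ y_k\land g(y_1,\dots,y_k)=u)\), and crucially \(f(t)\simeq^\circ u \coloneqq \exists y\,(t\simeq^\circ y\land R(y,u))\). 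Then set \((t\den)^\circ \coloneqq \exists u\,(t\simeq^\circ u)\) and \(P(t_1,\dots,t_k)^\circ \coloneqq \exists u_1\,\dots\,\exists u_k\,(t_1\simeq^\circ u_1\land\dots\land t_k\simeq^\circ u_k\land P(u_1,\dots,u_k))\), and propagate \((-)^\circ\) through the logical connectives and higher-sort quantifiers transparently.

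The key steps are then: (i) verify functionality, \(\rm{HAHP}R\vdash\forall u\,\forall u'\,(t\simeq^\circ u\land t\simeq^\circ u'\to u=u')\), which follows inductively from \(\forall x\,!y\,R(x,y)\); (ii) establish a substitution lemma \(\rm{HAHP}R\vdash(B[t])^\circ \liff \exists u\,(t\simeq^\circ u\land (B[u])^\circ)\); and (iii) show by induction on the derivation that \(\rm{HAHP}f\vdash A\) implies \(\rm{HAHP}R\vdash A^\circ\). The two defining axioms for \(f\) translate directly to the corresponding axioms for \(R\), which is the essential point. Since \(B^\circ\) is provably equivalent to \(B\) in \(\rm{HAHP}R\) whenever \(B\) is an \(\rm{HAHP}\)-formula, any \(\rm{HAHP}\)-formula provable in \(\rm{HAHP}f\) is already provable in \(\rm{HAHP}\).

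The main obstacle is the careful bookkeeping in the logic of partial terms when verifying (ii) and (iii). In particular, the modified \(\exists\)-I and \(\forall\)-E rules carry definedness side-conditions, and checking their soundness under \((-)^\circ\) requires the substitution lemma together with functionality: instantiating a universal with a possibly non-denoting term \(a\) produces an equivalent translated formula precisely when \(a\) denotes, which is exactly what the translated definedness premise asserts.
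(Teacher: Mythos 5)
Your proposal is correct and follows essentially the same route as the paper: the paper's proof is exactly the reduction to \autoref{the:relation_symbol} via the standard encoding of a partial function symbol as a single-valued relation symbol, citing \cite[Section 2.7]{TroelstraVanDalen1988I} for the details of the eliminative translation that you spell out explicitly. Your additional care about the definedness side-conditions in the logic of partial terms is exactly the bookkeeping the paper delegates to that reference.
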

    \begin{proof}
        This follows because every \(n\)-ary function symbol \(f\) can be encoded as an \((n+1)\)-ary relation symbol \(R\) and an axiom \(\forall\vec x\,!y\,R(\vec x,y)\).
        The axioms of \autoref{the:relation_symbol} are precisely the axioms of this proposition under this encoding.
        For a similar translation in more detail, see \cite[Section 2.7]{TroelstraVanDalen1988I}.
    \end{proof}
    \begin{proposition}
        Suppose that \(A[x^0,y^0]\) is an \(\rm{HAH}\)-formula and let \(\rm{HAHP}\sf c\) be the extension of \(\rm{HAHP}\) with a constant \(\sf c:0\) and axioms: \begin{align*}
            \forall x\,(\exists y\,A[x,y]\to\sf c\,x\den), &&
            \forall x\,(\sf c\,x\den\to A[x,\sf c\,x]).
        \end{align*}
        Then \(\rm{HAHP}\sf c\) is conservative over \(\rm{HAH}\).
    \end{proposition}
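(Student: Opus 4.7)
The plan is to prove this by forcing, following the template of \autoref{the:relation_symbol}, and then chaining with conservativity of \(\rm{HAHP}\) over \(\rm{HAH}\). I would take forcing conditions to be finite sets \(P=\{\<x_0,y_0\>,\dots,\<x_{n-1},y_{n-1}\>\}\) with the \(x_i\) distinct and \(A[x_i,y_i]\) holding for every \(i\); the intended reading is that \(P\) commits \(\sf c\,x_i=y_i\). The new difficulty compared to \autoref{the:relation_symbol} is that we are adding a constant of sort \(0\) rather than a relation symbol, so \(\sf c\) can occur in arbitrary positions inside \(\rm{HAHP}\sf c\)-terms (e.g.\ \(\sf c\,\sf c\), \(\sf s\,\sf c\,\sf k\,x\), or as the second argument of \(\sf{app}\)), and we cannot simply translate atomic occurrences as in \autoref{the:relation_symbol}.

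To cope with this, I would first define an auxiliary \(\rm{HAHP}\)-formula \(P\Vdash_{\sf c}t\simeq v\) by structural recursion on an \(\rm{HAHP}\sf c\)-term \(t\): constants of \(\rm{HAHP}\) and variables evaluate to themselves, and \(\sf c\) is treated opaquely, acquiring a concrete value only when it appears as the first argument of an outermost \(\sf{app}\) whose second argument has itself evaluated to some \(x\) with \(\<x,y\>\in P\). The term \(\sf{app}(s,t)\) is unfolded by first evaluating \(s\) and \(t\) under \(P\) and then either applying \(\sf{app}\) in the \(\rm{HAHP}\)-sense or, when the left operand resolves to \(\sf c\), consulting \(P\). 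Atomic formulas \(R(t_0,\dots,t_{k-1})\) are then forced iff each \(t_i\) evaluates to some \(v_i\) under \(P\) and \(R(v_0,\dots,v_{k-1})\) holds in \(\rm{HAHP}\); compound formulas and quantifiers are handled verbatim as in \autoref{the:relation_symbol}.

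After this, the proof runs in close parallel with \autoref{the:relation_symbol}: prove the monotonicity and density lemmas for \(P\Vdash_{\sf c}B\) by induction on \(B\), verify that pure \(\rm{HAHP}\)-formulas satisfy \(\rm{HAHP}\vdash\forall P\,((P\Vdash_{\sf c}B)\liff B)\), and show by induction on derivations that \(\rm{HAHP}\sf c\)-theorems are forced by every condition. The two choice axioms for \(\sf c\) are handled by a density argument, since any \(P\) can be extended by a witnessing pair \(\<x,y\>\) whenever \(\exists y\,A[x,y]\) holds. The main obstacle is the term-level forcing relation: one must check that the recursion is well-posed, that it is compatible with the partial combinatory algebra equations for \(\sf k,\sf s,\sf{rec},\sf{suc}\) when these are instantiated at terms containing \(\sf c\), and that the \(\downarrow\)-rules and the application axioms of \(\rm{HAHP}\) are forced by every condition. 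Once this technical step is dispatched, conservativity of \(\rm{HAHP}\sf c\) over \(\rm{HAHP}\) follows, and composing with the earlier theorem yields the stated conservativity over \(\rm{HAH}\).
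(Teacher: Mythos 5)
There is a genuine gap, and it is already visible in your conclusion: you end by establishing that \(\rm{HAHP}\sf c\) is conservative over \(\rm{HAHP}\) and then compose with conservativity of \(\rm{HAHP}\) over \(\rm{HAH}\). But the paper explicitly notes that the \(\epsilon\)-constants are \emph{not} conservative over \(\rm{HAHP}\) (``such functions do not exist for Kleene-application''), and the proposition is deliberately stated as conservativity over \(\rm{HAH}\) only. The obstruction is that \(\sf c\) is a term of sort \(0\), i.e.\ an element of the partial combinatory algebra, not a new symbol living outside the first-order domain the way \(R\) and \(f\) do in the preceding propositions. As soon as \(\exists y\,A[m,y]\) is provable for a single numeral \(m\), the \(\downarrow\)-fun rule gives \(\rm{HAHP}\sf c\vdash\sf c\den\), hence \(\exists\)-introduction at the term \(\sf c\) gives \(\rm{HAHP}\sf c\vdash\exists e^0\,(e=\sf c)\), and substitutivity of equality through the function symbol \(\sf{app}\) then yields \(\exists e^0\,(\forall x\,(\exists y\,A[x,y]\to e\,x\den)\land\forall x\,(e\,x\den\to A[x,e\,x]))\) --- a pure \(\rm{HAHP}\)-formula asserting that the \emph{existing} application already has a code for the choice function. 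Your forcing cannot make this true at any condition, since your clauses (correctly) force a pure \(\rm{HAHP}\)-formula iff it already holds. Concretely, the ``opaque'' evaluation of \(\sf c\), which consults \(P\) only when \(\sf c\) sits in function position, fails to validate \(\forall\)-E/\(\exists\)-I at the term \(\sf c\) and the congruence \(x=y\to x\,z\simeq y\,z\): for an actual numeral \(e\), \(e\,z\) is computed by the theory's \(\sf{app}\), while \(\sf c\,z\) is computed by \(P\), so \(e=\sf c\) cannot be coherently forced for any \(e\).

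This is exactly why the paper takes a detour. The forcing machinery you describe is what the paper uses to add a \emph{relation symbol} \(R\) in \autoref{the:relation_symbol}, and then a \emph{partial function symbol} \(f\), with the oracle axioms; these live outside the sort-\(0\) domain, so no term-level evaluation is needed. To internalize \(f\) as an element \(\sf c\) of sort \(0\), the paper then \emph{changes the application}: working in \(\rm{HAHP}f\), it defines an oracle-relativized evaluation \(\sf{eval}^f\) with new combinators \(\sf k^f,\sf s^f,\sf{suc}^f,\sf{rec}^f\), in which a genuine code querying \(f\) exists, and translates every \(\rm{HAHP}\sf c\)-formula \(A\) to an \(\rm{HAHP}f\)-formula \(A^f\) by relativizing \emph{all} applications and combinators, not only those involving \(\sf c\). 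Since this translation is the identity only on \(\rm{HAH}\)-formulas (which do not mention \(\sf{app}\)), one obtains conservativity over \(\rm{HAH}\) but not over \(\rm{HAHP}\) --- which is the correct, and the only attainable, statement.
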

    \begin{proof}
        We work in \(\rm{HAHP}f\) and use our existing evaluation \(\sf{eval}(a,b)\) to define a new evaluation \(\sf{eval}^f(a,b)\), which can use the partial function symbol \(f\) as an oracle.
        
        The informal idea to calculate \(\sf{eval}^f(a,b)\) is the following.
        We start by calculating \(\sf{eval}(a,b)\).
        If this returns a value \(\<0,x_0\>\) then that means that the function \(a\) wants to ask the oracle for the result of applying \(f\) to \(x_0\).
        So we supply this value and run the function again, now we calculate \(\sf{eval}(a,\<b,f(x_0)\>)\).
        If this returns a value \(\<0,x_1\>\) then the function \(a\) want another result from the oracle so we calculate \(\sf{eval}(a,\<b,f(x_0),f(x_1)\>)\).
        We keep doing this until \(a\) eventually returns a value \(\<1,c\>\) in which case we say \(\sf{eval}^f(a,b)=c\).
        
        More formally, we say that the formula \(\sf{eval}^f(a,b)=c\) is true if there exists a sequence \(\<x_0,\dots,x_{n-1}\>\) such that: \begin{itemize}
            \item for every \(i<n\) we have \(\sf{eval}(a,\<b,f(x_0),\dots,f(x_{i-1})\>)=\<0,x_i\>\);
            \item\(\sf{eval}(a,\<b,f(x_0),\dots,f(x_{n-1})\>)=\<1,c\>\).
        \end{itemize}
        For this new evaluation we can define new constants \(\sf k^f,\sf s^f,\sf{suc}^f,\sf{rec}^f\) which leads to a new lambda abstraction \(\lambda^f\).
        For the details, see \cite[Theorem 2.2]{VanOoster2006}.
        
        We can use this to show that \(\rm{HAHP}\sf c\) is conservative over \(\rm{HAH}\).
        For any \(\rm{HAHP}\sf c\)-formula \(A\) we relativize the evaluation and constants to \(f\) to get an \(\rm{HAHP}f\)-formula \(A^f\), we can prove with induction that we have \(\rm{HAHP}\sf c\vdash A\) iff \(\rm{HAHP}f\vdash A^f\).
        For an \(\rm{HAH}\)-formula \(B\) we see that \(B^f\) is the same as \(B\) so \(\rm{HAHP}\sf c\vdash B\) implies \(\rm{HAHP}f\vdash B\) which implies \(\rm{HAH}\vdash B\).
    \end{proof}
    \begin{theorem}\label{the:HAHPe}
        \(\rm{HAHP}\epsilon\) is conservative over \(\rm{HAH}\).
    \end{theorem}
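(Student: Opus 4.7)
The strategy is to combine a compactness observation with a multi-oracle generalisation of the three preceding propositions.

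First, since any formal derivation is finite, every \(\rm{HAHP}\epsilon\)-proof of an \(\rm{HAH}\)-formula \(B\) can invoke only finitely many of the new constants, say \(\epsilon_{\exists y\,A_1},\dots,\epsilon_{\exists y\,A_m}\) for \(\rm{HAH}\)-formulas \(A_1,\dots,A_m\). Hence it suffices to prove that the extension \(\rm{HAHP}\vec\epsilon\) of \(\rm{HAHP}\) by these \(m\) constants is conservative over \(\rm{HAH}\).

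For this finite extension I repeat the arguments of the three preceding propositions in vectorised form, adding \(m\) symbols at once. A forcing condition becomes an \(m\)-tuple \((P_1,\dots,P_m)\) of finite partial approximations, one for each \(R_i\), with extension and the clause \(\vec P\Vdash_{\vec R} R_i(x,y)\) interpreted coordinate-wise; the two monotonicity lemmas and the soundness lemma go through exactly as before. Translating the relation symbols to partial function symbols \(f_1,\dots,f_m\) via the second proposition gives conservativity of \(\rm{HAHP}\vec f\) over \(\rm{HAHP}\), hence over \(\rm{HAH}\). Finally, the oracle construction is generalised by defining a single relativised evaluation \(\sf{eval}^{\vec f}(a,b)\) whose tagged queries \(\<i,x\>\) consult the \(i\)-th oracle \(f_i\); this yields combinators \(\sf k^{\vec f},\sf s^{\vec f},\sf{suc}^{\vec f},\sf{rec}^{\vec f}\) and an abstraction \(\lambda^{\vec f}\) in terms of which each \(\epsilon_{\exists y\,A_i}\) is interpreted. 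Since each \(A_i\) is already an \(\rm{HAH}\)-formula, the resulting relativised translation fixes \(B\), so \(\rm{HAHP}\vec\epsilon\vdash B\) entails \(\rm{HAHP}\vec f\vdash B\), which in turn implies \(\rm{HAH}\vdash B\).

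The main obstacle is purely notational: rewriting the single-symbol arguments of the previous three propositions in parallel over a vector of symbols. The forcing case is routine, with monotonicity and soundness proved by exactly the same induction on formula and proof structure. The oracle case is slightly more delicate: one has to fix a tagging scheme distinguishing calls to the \(m\) oracles---for instance, pairs \(\<i,x\>\) with \(i<m\)---and adapt the recursion clauses defining \(\sf{eval}^{\vec f}\) and the relativised combinators accordingly. The crucial point is that each \(A_i\) is an \(\rm{HAH}\)-formula, so no circular dependencies arise between the different \(\epsilon\)-constants and the inductions remain well-founded. Once this bookkeeping is handled, the theorem follows.
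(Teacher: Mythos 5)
Your proposal is correct, and it shares the overall architecture of the paper's argument (finiteness of derivations, then the forcing/relation-symbol/oracle machinery of the three preceding propositions), but it diverges at the key reduction step. The paper does \emph{not} vectorise anything: having isolated the finitely many constants \(\epsilon_{\exists y\,B_i}\) for \(i<n\), it merges them into a \emph{single} \(\epsilon\)-constant for the formula \(C[z,y]\coloneqq\bigland_{i<n}\forall\vec x\,(z=\<i,\vec x\>\to B_i[\vec x,y])\), recovering each \(\epsilon_{\exists y\,B_i}\,\vec x\) as \(\epsilon_{\exists y\,C}\,\<i,\vec x\>\); the single-constant proposition then applies verbatim and no part of the earlier development needs to be revisited. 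Your route instead re-proves the forcing, relation-to-function, and oracle propositions in parallel for \(m\) symbols at once, with \(m\)-tuples of forcing conditions and tagged oracle queries \(\<i,x\>\). Both work, and your observation that the \(A_i\) are \(\rm{HAH}\)-formulas (so no circularity arises among the constants) is exactly the point that makes either reduction sound. What the paper's trick buys is economy: the index-tagging is pushed into the \emph{formula} rather than into the metatheory, so the "purely notational" obstacle you identify simply disappears. What your version buys is perhaps a slightly more transparent statement of what is being added, at the cost of having to certify that three nontrivial inductive arguments really do relativise coordinate-wise — a burden you assert but do not discharge, though it is routine.
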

    \begin{proof}
        Suppose that we have \(\sf{HAHP}\epsilon\vdash A\) for an \(\sf{HAH}\)-formula \(A\).
        Note that the proof for \(A\) can only use a finite amount of choice functions, say \(\epsilon_{\exists y\,B_i}\) for \(i<n\).
        We can modify the proof of \(A\) to use only the choice function \(\epsilon_{\exists y\,C}\) where \(C[z,y]\coloneqq\bigland_{i<n}\forall\vec x\,(z=\<i,\vec x\>\to B_i[\vec x,y])\).
        So the theorem follows from the previous proposition.
    \end{proof}
    \begin{corollary}[proof-relevant conservativity]
        For a first-order arithmetical formula \(A\), we have that \(\rm{HAH}\) proves \(A\) iff there exists a term \(a\) such that \(\lambda\rm C{+}\) proves \(\Gamma_A\vdash a:A^\openbullet\).
    \end{corollary}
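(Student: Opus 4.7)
The plan is to mirror the structure of the proof of Corollary~\ref{cor:irrelevant_conservativity}, but to route the reverse direction through the conservative extensions HAHP and HAHP\(\epsilon\) in order to apply Theorem~\ref{the:relevant_conservativity}.

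For the forward direction, the proof-relevant interpretation \((-)^\openbullet\) validates the inference rules of intuitionistic first-order logic and the arithmetical axioms of HA in \(\lambda\)C\(+\): the types \(A^\openbullet\) for first-order \(A\) are built only from \(\bb N, \Sigma, \Pi, +, =\), and the standard HoTT-style constructions give terms witnessing each HA-axiom (induction uses \(\sf{ind}^{\bb N}\), Peano axioms follow from the inductive structure of \(\bb N\)). Hence if HAH proves a first-order formula \(A\), one can recursively translate the derivation into a term \(a\) with \(\Gamma_A \vdash a : A^\openbullet\).

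For the reverse direction, suppose \(\Gamma_A \vdash a : A^\openbullet\) in \(\lambda\)C\(+\). First I would observe that the model construction from \autoref{sec:interpreting_type_theory} can be performed in HAHP (using the primitive \(\sf{app}\) in place of Kleene-application), so soundness yields \(\text{HAHP} \vdash \exists z\,\<z,z\> \in \llb A^\openbullet\rrb\) just as in the proof of Corollary~\ref{cor:irrelevant_conservativity}. Passing to HAHP\(\epsilon\), which extends HAHP, Theorem~\ref{the:relevant_conservativity} gives \(\text{HAHP}\epsilon \vdash \exists z\,\<z,z\> \in \llb A^\openbullet\rrb \liff A\), so HAHP\(\epsilon\) proves \(A\). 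Since \(A\) is an HA-formula and in particular an HAH-formula, Theorem~\ref{the:HAHPe} then yields \(\text{HAH}\vdash A\), as required.

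The main obstacle I expect is verifying that the model construction, which was described using sets in HAH and Kleene-application, transfers cleanly to HAHP. Concretely, one must check that the definitions of \(\sf{Subsing}, \sf{PER}, \sf{Assem}^n\) and all the operators \(\Sigma, \Pi, \W, =, \|\cdot\|, /\) make sense with the primitive \(\sf{app}\) as the notion of application, and that the soundness argument interpreting \(\Gamma_A \vdash a : A^\openbullet\) still produces a canonical realizer \(z\) with \(\<z,z\>\in \llb A^\openbullet\rrb\). This is essentially bookkeeping once HAHP has been set up, but it is the single place where the earlier proof must be adapted rather than reused verbatim.
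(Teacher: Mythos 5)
Your proposal matches the paper's proof, which likewise reduces the corollary to Theorems~\ref{the:relevant_conservativity} and~\ref{the:HAHPe} via the chain \(\lambda\rm C{+}\to\rm{HAHP}\to\rm{HAHP}\epsilon\to\rm{HAH}\), exactly as in Corollary~\ref{cor:irrelevant_conservativity}. One small caution: for the forward direction the HAH-derivation of a first-order \(A\) may pass through genuinely higher-order formulas, so you need the full proof-relevant interpretation of HAH (including comprehension via the impredicativity of \(\Set\)), not just the HA fragment you mention --- but the paper has already established this.
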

    \begin{proof}
        This follows from \autoref{the:relevant_conservativity} and \autoref{the:HAHPe} in the same way as \autoref{cor:irrelevant_conservativity}.
    \end{proof}    
    
    \section{De Jongh's Theorem for Type Theory}\label{sec:de_jongh}

    In this section we discuss an application of our results and our initial motivation.
    De Jongh's original theorem shows that the axioms of HA respect the intuitionistic logic:
    \begin{theorem}[De Jongh \cite{DeJongh70}]
        Let \(A[P_0,\dots,P_{n-1}]\) be a propositional formula with propositional variables \(P_0,\dots,P_{n-1}\).
        If \(A\) is not provable in intuitionistic propositional logic then we can construct sentences \(B_0,\dots,B_{n-1}\) in the language of \(\rm{HA}\) such that \(A[B_0,\dots,B_{n-1}]\) is not provable in \(\rm{HA}\).
    \end{theorem}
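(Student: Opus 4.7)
The plan is to combine the Kripke-completeness of intuitionistic propositional logic with a construction of a Kripke model of \(\rm{HA}\) whose propositional behaviour can be steered by arithmetical sentences. First, since \(A[P_0,\dots,P_{n-1}]\) is not derivable in IPC, by the standard completeness theorem for finite rooted Kripke frames I may fix a finite partial order \((W,\le)\) with root \(w_0\) and a propositional valuation \(v\) such that \(A\) fails at \(w_0\) under \(v\). The aim is then to turn this propositional counter-model into an arithmetical one.

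The next step is to construct a Kripke model \(\mathcal M\) of \(\rm{HA}\) sitting above the frame \((W,\le)\): at each node \(w\) place a suitable model \(M_w\) of \(\rm{HA}\), together with transition maps compatible with \(\le\), and verify soundness of \(\rm{HA}\) with respect to Kripke forcing \(\Vdash\) in the usual way. The critical step is then to pick arithmetical sentences \(B_0,\dots,B_{n-1}\) --- so-called de Jongh sentences --- whose forcing pattern across the nodes precisely mirrors \(v\), i.e.\ \(\mathcal M,w \Vdash B_i\) iff \(v(w,P_i)=1\). These are obtained by a Gödel-style diagonal argument: one exploits the expressive power of \(\rm{HA}\) (its capacity for self-reference via Gödel numbering and the abundance of mutually independent \(\Pi_1\)-sentences) to separate the finitely many nodes of \(W\) while remaining compatible with the ordering.

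Finally, I would prove by induction on the structure of a propositional formula \(C[\vec P]\) that \(\mathcal M,w \Vdash C[\vec B]\) iff \((W,\le),w \Vdash_v C[\vec P]\). Applied to \(A\) at the root, this yields \(\mathcal M,w_0 \not\Vdash A[\vec B]\), and by soundness of Kripke semantics for \(\rm{HA}\) we conclude that \(\rm{HA}\not\vdash A[B_0,\dots,B_{n-1}]\).

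The principal obstacle is the construction of the de Jongh sentences in the third step: engineering arithmetical statements whose forcing behaviour in \(\mathcal M\) realises the prescribed propositional valuation at every node simultaneously, while still respecting the monotonicity required of Kripke forcing. This is the delicate heart of De Jongh's original argument, most cleanly handled by the techniques of Smoryński and Visser for building Kripke models of \(\rm{HA}\) with controlled propositional behaviour; the remaining ingredients are comparatively routine.
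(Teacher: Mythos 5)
The paper does not actually prove this theorem: it is quoted from De Jongh's 1970 work and used as a black box (only the HAH and type-theoretic corollaries are argued, via Passmann), so there is no internal proof to compare yours against. Your outline follows the standard model-theoretic route --- which in this purely Kripke-semantic form is due to Smory\'nski rather than to De Jongh, whose original argument combined Kripke models with realizability --- and the overall architecture (finite rooted counter-model for IPC, a Kripke model of \(\rm{HA}\) over the same frame, substitution sentences whose forcing pattern realizes the valuation, and an induction on propositional structure) is the right one.

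As a proof, however, the proposal has a genuine gap, and you have located it yourself: the construction of \(B_0,\dots,B_{n-1}\) \emph{is} the theorem, and ``a G\"odel-style diagonal argument'' exploiting ``mutually independent \(\Pi_1\)-sentences'' does not yet deliver it. What is actually needed is: (i) the reduction from finite rooted posets to finite trees (IPC is complete for finite trees, and trees are what the gluing construction handles); (ii) Smory\'nski's gluing lemma, i.e.\ the verification that placing the standard model \(\omega\) at a new root below finitely many Kripke models of \(\rm{HA}\) again yields a Kripke model of \(\rm{HA}\) --- this is where soundness genuinely has to be checked, since an arbitrary assignment of classical structures \(M_w\) to nodes will not in general force the induction scheme; (iii) a supply of finitely many mutually independent sentences over \(\rm{PA}\) (G\"odel--Rosser) used to choose classical models at the leaves so that the leaves are separated; and (iv) the definition of each \(B_i\) as a disjunction of node-sentences over the nodes where \(v(\,\cdot\,,P_i)\) holds, together with the check that its forcing set is exactly that monotone set. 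None of (i)--(iv) appears in the proposal, and (ii) in particular can fail for a careless choice of the \(M_w\); so what you have written is an accurate map of the proof rather than the proof itself.
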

    De Jongh and Smorynski also prove a similar theorem for HA2 \cite{DeJonghSmorynski1976}.
    This is not the case for any theory, for example ZF: there are propositional formulas that are not provable intuitionistically, but for which every concrete instance can be derived intuitionistically from the axioms of ZF \cite{FriedmanScedrov1986}.
    Recently, Robert Passmann has shown that De Jongh's theorem does hold for CZF and IZF (which are equivalent to ZF classically but not intuitionistically) \cite{Passmann2020,Passmann2022}.
    Our goal was to prove similar results for type theory.
    First we observe that we can use Passman's proof to obtain a new result for HAH:
    \begin{corollary}
        De Jongh's Theorem holds for \(\rm{HAH}\).
    \end{corollary}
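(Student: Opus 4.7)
The plan is to adapt Robert Passmann's Kripke-model argument for CZF and IZF \cite{Passmann2020,Passmann2022} to the setting of monadic higher-order arithmetic. Suppose $A[P_0,\dots,P_{n-1}]$ is not derivable in intuitionistic propositional logic, and fix a finite propositional Kripke countermodel $(K,{\le},V)$ with root $k_0 \in K$ such that $k_0 \not\Vdash A$. The strategy is to build a Kripke model $\mathcal{M}$ of \(\rm{HAH}\) over the frame $(K,{\le})$, and to construct \(\rm{HAH}\)-sentences $B_0,\dots,B_{n-1}$ such that $\mathcal{M},k \Vdash B_i$ holds iff $V(k,P_i) = \mathrm{true}$; then $\mathcal{M},k_0 \not\Vdash A[B_0,\dots,B_{n-1}]$, and soundness of Kripke semantics for \(\rm{HAH}\) delivers non-provability.

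First I would define $\mathcal{M}$ by placing at each world $k$ a hierarchy $\mathbb{N},\mathcal{P}(\mathbb{N}),\mathcal{P}^2(\mathbb{N}),\dots$, with transition maps $\mathcal{M}(k) \hookrightarrow \mathcal{M}(k')$ for $k \le k'$ allowing new ``generic'' subsets to appear at the higher-order levels as one moves upwards — this is the analogue of Passmann's blended construction for set theory, except the cumulative hierarchy is replaced by the stratified monadic one. The verification that $\mathcal{M}$ validates every axiom of \(\rm{HAH}\) is then an induction over the schemes: equality and induction are inherited from the local classical structure, extensionality holds because transition maps respect $\in$, and comprehension is arranged by ensuring that for each formula the required predicate has been added at the appropriate sort.

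Second, following Passmann's recipe, I would construct the $B_i$ by arithmetically encoding the finite frame $K$ and diagonalising to force the forcing-value of $B_i$ at each world $k$ to match $V(k,P_i)$. The higher-order expressiveness of \(\rm{HAH}\) over HA provides ample room to produce sufficiently many independent sentences, in particular sentences that reference the presence or absence of specific generics at each world. A final compositional calculation then shows $\mathcal{M}, k_0 \Vdash B_i$ iff $V(k_0,P_i)=\mathrm{true}$, and hence $\mathcal{M}, k_0 \not\Vdash A[B_0,\dots,B_{n-1}]$.

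The main obstacle will be verifying full higher-order comprehension at every world of the blended model. Because comprehension must produce a subset of $\mathcal{P}^n(\mathbb{N})$ for \emph{every} \(\rm{HAH}\)-formula — including formulas mentioning $B_i$ themselves — the blending has to be fine-tuned so that the generics added at higher worlds are already present as witnesses, without collapsing the distinctions between worlds that the $B_i$ rely upon. This is precisely where Passmann's machinery for IZF, based on a careful closure operation on admissible predicates, needs to be re-expressed in the monadic higher-order signature: the predicate variables at sort $n+1$ must range exactly over the subsets made available by the construction, and no more. Once this closure condition is isolated, the soundness of the Kripke semantics of \(\rm{HAH}\) with respect to such models should be a routine, if lengthy, induction.
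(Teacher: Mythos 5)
Your proposal sets out to re-run Passmann's entire blended-Kripke-model construction from scratch inside monadic higher-order arithmetic, and you yourself identify the crux --- verifying full higher-order comprehension at every world, for every \(\rm{HAH}\)-formula including those mentioning the \(B_i\) --- as an unresolved obstacle that ``needs to be re-expressed'' and ``fine-tuned.'' That is precisely the step on which the whole argument hangs, and deferring it means the proposal is not yet a proof: without a concrete closure condition on the predicates available at each sort and each world, you cannot conclude that \(\mathcal M\) models \(\rm{HAH}\) at all, and the soundness appeal at the end has nothing to stand on. The construction of the \(B_i\) by ``encoding the frame and diagonalising'' is likewise only gestured at.

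The missing idea is that none of this model-building is necessary. The paper's proof is a two-observation reduction: (1) under the interpretation of \(\exists x^n\) as \(\exists(x\in\cal P^n(\omega))\), every theorem of \(\rm{HAH}\) is a theorem of \(\rm{IZF}\), i.e.\ \(\rm{HAH}\) is a subtheory of \(\rm{IZF}\); and (2) the specific sentences \(B_0,\dots,B_{n-1}\) that Passmann produces for \(\rm{IZF}\) --- disjunctions of cardinality statements about \(\cal P(1)\) and \(\cal P(\aleph_0)\) --- only ever quantify over objects living in \(\cal P^2(\omega)\), so they can already be written in the language of \(\rm{HAH}\). Since non-provability passes downward from a theory to any subtheory, Passmann's conclusion that \(A[B_0,\dots,B_{n-1}]\) is unprovable in \(\rm{IZF}\) immediately gives unprovability in \(\rm{HAH}\). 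The only work left is the (routine but necessary) transcription of \(\Gamma_k\) and \(\Delta_l\) into the monadic higher-order signature, which the paper carries out in its appendix. Your route, if completed, would yield a self-contained proof not relying on the set-theoretic detour, but as written it replaces a one-paragraph reduction with an open-ended research programme whose hardest lemma is left unproved.
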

    \begin{proof}
        This theorem follows from Passmann's proof for IZF because of the following two observations: HAH can be seen as a subtheory of IZF, and the sentences \(B_0,\dots,B_{n-1}\) used by Passmann can already be stated in the language of HAH.
        See \autoref{app:de_jongh}.
    \end{proof}
    Now, using our conservativity results, we see the following:
    \begin{corollary}
        De Jongh's Theorem holds for \(\lambda\rm C{+}\) (and smaller type theories) for both the proof-relevant and proof-irrelevant interpretations of (higher-order) logic.
    \end{corollary}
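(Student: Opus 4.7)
The plan is to reduce this to the preceding corollary (De Jongh's theorem for HAH) via the two conservativity results, \autoref{cor:irrelevant_conservativity} and its proof-relevant counterpart. Let $A[P_0,\dots,P_{n-1}]$ be a propositional formula that is not provable in intuitionistic propositional logic. I would first invoke the previous corollary to obtain sentences $B_0,\dots,B_{n-1}$ in the language of HAH such that $A[B_0,\dots,B_{n-1}]$ is not provable in HAH. For the proof-relevant half of the statement, I additionally want to arrange that these sentences lie in the language of HA (equivalently, that $A[B_0,\dots,B_{n-1}]$ is a first-order arithmetical formula); inspecting the sentences used in Passmann's argument (as recalled in \autoref{app:de_jongh}) shows they are already first-order arithmetical, so the De Jongh witnesses for HAH can be taken to be HA-sentences. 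Let $C \coloneqq A[B_0,\dots,B_{n-1}]$.

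For the proof-irrelevant interpretation, I would argue by contradiction: suppose there exists a term $a$ with $\lambda\rm C{+} \vdash \Gamma_C \vdash a : C^\bullet$. By \autoref{cor:irrelevant_conservativity}, this forces $\mathrm{HAH} \vdash C$, contradicting the choice of $B_0,\dots,B_{n-1}$. Hence no such term exists, which is exactly the statement of De Jongh's theorem for $\lambda\rm C{+}$ under the proof-irrelevant interpretation. The same argument transfers down to weaker type theories, since the interpretation $\mathrm{HAH} \to \lambda\rm C{+}$ factors through each of them and the realizability interpretation $\lambda\rm C{+} \to \mathrm{HAH}$ is what supplies the conservativity.

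For the proof-relevant interpretation, I would use the proof-relevant conservativity corollary in place of \autoref{cor:irrelevant_conservativity}. Since $C$ is first-order by the choice of $B_i$, suppose $\lambda\rm C{+} \vdash \Gamma_C \vdash a : C^\openbullet$; the proof-relevant conservativity then yields $\mathrm{HAH} \vdash C$, again contradicting De Jongh for HAH. So no such term exists, establishing De Jongh's theorem in the proof-relevant setting. The same reasoning covers smaller type theories, since every such theory is interpreted by the same model inside HAH(P$\epsilon$).

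The step I expect to require the most care is the verification that Passmann's De Jongh sentences can be chosen to live in the language of HA, so that $C$ is first-order and the proof-relevant conservativity result actually applies. This is not an essentially new argument but a bookkeeping check, deferred to the appendix: once it is confirmed that his sentences are arithmetical of first order, the two halves of the corollary follow by the uniform contradiction argument sketched above, and no further model-theoretic input beyond the conservativity theorems is needed.
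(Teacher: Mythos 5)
Your overall strategy---reducing to De Jongh's theorem for HAH via the two conservativity corollaries---is exactly what the paper intends (it states this corollary without proof, as an immediate consequence of the conservativity results), and your proof-irrelevant half is correct as written: \autoref{cor:irrelevant_conservativity} applies to arbitrary higher-order formulas, so Passmann's witnesses transfer directly, and non-provability in \(\lambda\rm C{+}\) passes down to the weaker type theories simply because they prove fewer judgements.

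The proof-relevant half, however, has a genuine gap. You claim that inspecting \autoref{app:de_jongh} shows Passmann's sentences are ``already first-order arithmetical''; they are not. The witnesses there have the form \(\biglor_k(\Gamma_k\land\bigland_l\neg(\neg\Delta_l\land\Delta_{l+1}))\), where \(\Gamma_k\) quantifies over second-order variables \(X_0^1,\dots,X_{k-1}^1\) (subsets of \(\bb N\)) and \(\Delta_l\) quantifies over third-order variables \(\cal X_0^2,\dots,\cal X_l^2\); so \(C=A[B_0,\dots,B_{n-1}]\) is a third-order sentence. The proof-relevant conservativity corollary covers only first-order formulas, and this restriction is essential: the paper exhibits a second-order formula (the axiom of choice) whose proof-relevant interpretation is inhabited in \(\lambda\rm C{+}\) even though it is unprovable in \(\rm{HAH}\). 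Hence the implication ``\(\lambda\rm C{+}\vdash\Gamma_C\vdash a:C^\openbullet\) implies \(\rm{HAH}\vdash C\)'' is not available for this \(C\), and your contradiction argument does not go through. To repair it you would need De Jongh witnesses for \(\rm{HAH}\) that are genuinely first-order (so that the first-order conservativity applies), or a separate argument that these particular higher-order witnesses remain underivable under the proof-relevant interpretation; neither is the routine bookkeeping check you defer to.
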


    \section{Conclusion and Future Work}

    Our results give us a new way of looking at proof-(ir)relevance.
    While these two interpretations prove different second-order formulas, we see that they can still coincide for first-order formulas.
    In addition, we obtain a characterisation of the arithmetical formulas provable in type theory: \begin{itemize}
        \item without universes: less formulas than HA, see \cite[Theorem 7.5.1]{Beeson1985}.
        \item with one level of impredicative universes: the same formulas as HAH, see Theorems \ref{the:irrelevant_conservativity}, \ref{the:relevant_conservativity}.
        \item with one level of predicative universes: less formulas than HAH, see \autoref{subsec:interpreting_power_sets}.
    \end{itemize}
    It would be interesting to consider more levels of universes: to which arithmetical theories does this correspond?
    In addition, if can find a predicative version of higher-order logic, then this could give a more precise characterisation for predicative universes.

    \newpage
    
    \bibliography{refs}

\begin{thebibliography}{10}

\bibitem{AwodeyFreySpeight2018}
Steve Awodey, Jonas Frey, and Sam Speight.
\newblock Impredicative encodings of (higher) inductive types.
\newblock In {\em Proceedings of the 33rd Annual ACM/IEEE Symposium on Logic in
  Computer Science}, LICS '18, page 76–85, New York, NY, USA, 2018.
  Association for Computing Machinery.
\newblock \href {https://doi.org/10.1145/3209108.3209130}
  {\path{doi:10.1145/3209108.3209130}}.

\bibitem{Awodey2012}
Steve Awodey, Nicola Gambino, and Kristina Sojakova.
\newblock Inductive types in homotopy type theory.
\newblock In {\em 2012 27th Annual IEEE Symposium on Logic in Computer
  Science}, pages 95--104, 2012.
\newblock \href {https://doi.org/10.1109/LICS.2012.21}
  {\path{doi:10.1109/LICS.2012.21}}.

\bibitem{Beeson1985}
Michael Beeson.
\newblock {\em Foundations of Constructive Mathematics}.
\newblock A Series of Modern Surveys in Mathematics. Springer, Berlin, 1985.

\bibitem{Bertot2013}
Yves Bertot and Pierre Cast{\'e}ran.
\newblock {\em Interactive theorem proving and program development,
  {Coq’Art}: the calculus of inductive constructions}.
\newblock Springer Science \& Business Media, 2013.

\bibitem{Bethke1988}
Ingemarie Bethke.
\newblock {\em Notes on partial combinatory algebras}.
\newblock PhD thesis, University of Amsterdam, 1988.

\bibitem{Caicedo2013}
Andr\'es Caicedo.
\newblock How is exponentiation defined in {P}eano arithmetic?
\newblock Mathematics Stack Exchange, 2013.
\newblock URL: https://math.stackexchange.com/q/313049 (version: 2017-04-13).
\newblock URL: \url{https://math.stackexchange.com/q/313049}, \href
  {http://arxiv.org/abs/https://math.stackexchange.com/q/313049}
  {\path{arXiv:https://math.stackexchange.com/q/313049}}.

\bibitem{Cantor1877}
Georg Cantor.
\newblock Ein {B}eitrag zur {M}annigfaltigkeitslehre.
\newblock {\em Journal für die reine und angewandte {M}athematik},
  84:242--258, 1877.
\newblock URL: \url{http://eudml.org/doc/148353}.

\bibitem{ChenRathjen2012}
Ray-Ming Chen and Michael Rathjen.
\newblock Lifschitz realizability for intuitionistic {Z}ermelo--{F}raenkel set
  theory.
\newblock {\em Archive for Mathematical Logic}, 51(7):789--818, 2012.

\bibitem{Conway2012}
J.H. Conway.
\newblock {\em Regular Algebra and Finite Machines}.
\newblock Chapman and Hall mathematics series. Dover Publications,
  Incorporated, 2012.
\newblock URL: \url{https://books.google.nl/books?id=1KAXc5TpEV8C}.

\bibitem{CoquandHuet1988}
Thierry Coquand and Gérard Huet.
\newblock The calculus of constructions.
\newblock {\em Information and Computation}, 76(2):95--120, 1988.
\newblock URL:
  \url{https://www.sciencedirect.com/science/article/pii/0890540188900053},
  \href {https://doi.org/https://doi.org/10.1016/0890-5401(88)90005-3}
  {\path{doi:https://doi.org/10.1016/0890-5401(88)90005-3}}.

\bibitem{DeJongh70}
Dick de~Jongh.
\newblock The maximality of the intuitionistic predicate calculus with respect
  to {H}eyting’s arithmetic.
\newblock {\em The Journal of Symbolic Logic}, 1970.

\bibitem{DeJonghSmorynski1976}
Dick de~Jongh and Craig Smorynski.
\newblock {K}ripke models and the intuitionistic theory of species.
\newblock {\em Annals of Mathematical Logic}, 9(1):157, 1976.
\newblock \href {https://doi.org/10.1016/0003-4843(76)90008-5}
  {\path{doi:10.1016/0003-4843(76)90008-5}}.

\bibitem{Dybjer1997}
Peter Dybjer.
\newblock Representing inductively defined sets by wellorderings in
  {M}artin-{L}öf's type theory.
\newblock {\em Theoretical Computer Science}, 176(1):329--335, 1997.
\newblock URL:
  \url{https://www.sciencedirect.com/science/article/pii/S0304397596001454},
  \href {https://doi.org/https://doi.org/10.1016/S0304-3975(96)00145-4}
  {\path{doi:https://doi.org/10.1016/S0304-3975(96)00145-4}}.

\bibitem{FriedmanScedrov1986}
Harvey Friedman and Andrej {\v S}{\v c}edrov.
\newblock On the quantificational logic of intuitionistic set theory.
\newblock {\em Mathematical proceedings of the Cambridge Philosophical
  Society}, 99(1):5--10, 1986.

\bibitem{Geuvers2001}
Herman Geuvers.
\newblock Induction is not derivable in second order dependent type theory.
\newblock In Samson Abramsky, editor, {\em Typed Lambda Calculi and
  Applications}, pages 166--181, Berlin, Heidelberg, 2001. Springer Berlin
  Heidelberg.

\bibitem{Girard1989}
Jean-Yves Girard.
\newblock {\em Proofs and types}, volume~7.
\newblock Cambridge university press Cambridge, 1989.

\bibitem{HajekPudlak2017}
Petr H{\'a}jek and Pavel Pudl{\'a}k.
\newblock {\em Metamathematics of first-order arithmetic}, volume~3.
\newblock Cambridge University Press, 2017.

\bibitem{Hofmann1997}
Martin Hofmann.
\newblock {\em Syntax and Semantics of Dependent Types}, page 79–130.
\newblock Publications of the Newton Institute. Cambridge University Press,
  1997.
\newblock \href {https://doi.org/10.1017/CBO9780511526619.004}
  {\path{doi:10.1017/CBO9780511526619.004}}.

\bibitem{Hyland1988}
Martin Hyland.
\newblock A small complete category.
\newblock {\em Annals of pure and applied logic}, 40(2):135--165, 1988.

\bibitem{Jacobs1993}
Bart Jacobs.
\newblock Comprehension categories and the semantics of type dependency.
\newblock {\em Theoretical Computer Science}, 107(2):169--207, 1993.
\newblock URL:
  \url{https://www.sciencedirect.com/science/article/pii/030439759390169T},
  \href {https://doi.org/https://doi.org/10.1016/0304-3975(93)90169-T}
  {\path{doi:https://doi.org/10.1016/0304-3975(93)90169-T}}.

\bibitem{Kaye1991}
Richard Kaye.
\newblock {\em Models of {P}eano arithmetic}.
\newblock Clarendon Press, 1991.

\bibitem{Kleene1956}
S.~C. Kleene.
\newblock {\em Representation of Events in Nerve Nets and Finite Automata},
  pages 3--42.
\newblock Princeton University Press, Princeton, 1956.
\newblock URL: \url{https://doi.org/10.1515/9781400882618-002} [cited
  2023-07-28], \href {https://doi.org/doi:10.1515/9781400882618-002}
  {\path{doi:doi:10.1515/9781400882618-002}}.

\bibitem{Kozen1994}
D.~Kozen.
\newblock A completeness theorem for kleene algebras and the algebra of regular
  events.
\newblock {\em Information and Computation}, 110(2):366--390, 1994.
\newblock URL:
  \url{https://www.sciencedirect.com/science/article/pii/S0890540184710376},
  \href {https://doi.org/https://doi.org/10.1006/inco.1994.1037}
  {\path{doi:https://doi.org/10.1006/inco.1994.1037}}.

\bibitem{LongoMoggi1991}
Giuseppe Longo and Eugenio Moggi.
\newblock Constructive natural deduction and its ‘\(\omega\)-set’
  interpretation.
\newblock {\em Mathematical Structures in Computer Science}, 1(2):215–254,
  1991.
\newblock \href {https://doi.org/10.1017/S0960129500001298}
  {\path{doi:10.1017/S0960129500001298}}.

\bibitem{Lubarsky2005}
Robert~S. Lubarsky.
\newblock Independence results around constructive {ZF}.
\newblock {\em Annals of Pure and Applied Logic}, 132(2):209--225, 2005.
\newblock URL:
  \url{https://www.sciencedirect.com/science/article/pii/S0168007204001241},
  \href {https://doi.org/https://doi.org/10.1016/j.apal.2004.08.002}
  {\path{doi:https://doi.org/10.1016/j.apal.2004.08.002}}.

\bibitem{MartinLof84}
Per Martin-L\"of.
\newblock {\em Intuitionistic type theory}.
\newblock Studies in proof theory. Lecture notes; 1 861180607. Bibliopolis,
  Napoli, 1984.

\bibitem{Otten2022}
Dani\"el Otten.
\newblock De {J}ongh’s theorem for type theory.
\newblock Master's thesis, University of Amsterdam, 2022.
\newblock URL: \url{https://eprints.illc.uva.nl/id/document/12640}.

\bibitem{Passmann2020}
Robert Passmann.
\newblock De {J}ongh’s theorem for intuitionistic {Z}ermelo-{F}raenkel set
  theory.
\newblock In Maribel Fern{\'a}ndez and Anca Muscholl, editors, {\em 28th EACSL
  Annual Conference on Computer Science Logic (CSL 2020)}, volume 152 of {\em
  Leibniz International Proceedings in Informatics (LIPIcs)}, pages
  33:1--33:16, Dagstuhl, Germany, 2020. Schloss Dagstuhl--Leibniz-Zentrum fuer
  Informatik.
\newblock URL: \url{https://drops.dagstuhl.de/opus/volltexte/2020/11676}, \href
  {https://doi.org/10.4230/LIPIcs.CSL.2020.33}
  {\path{doi:10.4230/LIPIcs.CSL.2020.33}}.

\bibitem{Passmann2022}
Robert Passmann.
\newblock The first-order logic of {CZF} is intuitionistic first-order logic.
\newblock {\em The Journal of Symbolic Logic}, page 1–23, 2022.
\newblock \href {https://doi.org/10.1017/jsl.2022.51}
  {\path{doi:10.1017/jsl.2022.51}}.

\bibitem{Paulin2015}
Christine Paulin-Mohring.
\newblock Introduction to the calculus of inductive constructions, 2015.

\bibitem{Reus1999}
Bernhard Reus.
\newblock Realizability models for type theories.
\newblock {\em Electronic Notes in Theoretical Computer Science},
  23(1):128--158, 1999.
\newblock Tutorial Workshop on Realizability Semantics and Applications
  (associated to FLoC'99, the 1999 Federated Logic Conference).
\newblock URL:
  \url{https://www.sciencedirect.com/science/article/pii/S1571066104001082},
  \href {https://doi.org/https://doi.org/10.1016/S1571-0661(04)00108-2}
  {\path{doi:https://doi.org/10.1016/S1571-0661(04)00108-2}}.

\bibitem{Shulman2018}
Mike Shulman.
\newblock Impredicative encodings, part 3, Nov 2018.
\newblock URL:
  \url{https://homotopytypetheory.org/2018/11/26/impredicative-encodings-part-3/}.

\bibitem{Smith1988}
Jan~M. Smith.
\newblock The independence of {P}eano's fourth axiom from {M}artin-{L}öf's
  type theory without universes.
\newblock {\em The Journal of Symbolic Logic}, 53(3):840--845, 1988.
\newblock URL: \url{http://www.jstor.org/stable/2274575}.

\bibitem{Troelstra1973}
Anne~S. Troelstra.
\newblock {\em Metamathematical Investigation of Intuitionistic Arithmetic and
  Analysis}.
\newblock New York,: Springer, 1973.

\bibitem{TroelstraVanDalen1988I}
Anne~Sjerp Troelstra and Dirk van Dalen.
\newblock {\em Constructivism in Mathematics}, volume~I.
\newblock North-Holland Publishing Co., 1988.

\bibitem{TroelstraVanDalen1988II}
Anne~Sjerp Troelstra and Dirk van Dalen.
\newblock {\em Constructivism in Mathematics}, volume~II.
\newblock North-Holland Publishing Co., 1988.

\bibitem{HOTT}
The {Univalent Foundations Program}.
\newblock {\em Homotopy Type Theory: Univalent Foundations of Mathematics}.
\newblock \url{https://homotopytypetheory.org/book}, Institute for Advanced
  Study, 2013.

\bibitem{VanDenBergVanSlooten2018}
Benno van~den Berg and Lotte van Slooten.
\newblock Arithmetical conservation results.
\newblock {\em Indagationes Mathematicae}, 29:260--275, 2018.

\bibitem{VanOoster2006}
Jaap van Oosten.
\newblock A general form of relative recursion.
\newblock {\em Notre Dame Journal of Formal Logic}, 47(3):311--318, 2006.

\end{thebibliography}

    \newpage
    \appendix

    \section{Type Theory}\label{app:type_theory}

    \begin{flalign*}
        &\mathrlap{\prftree[r,l]{start,}{\(x\) not free in \(\Gamma\)}
            {\Gamma\vdash A:\cal A}
            {\Gamma,x:A\vdash x:A}} & &&
        \mathllap{\prftree[r,l]{weakening,}{\(x\) not free in \(\Gamma\)}
            {\Gamma\vdash A:\cal A}
            {\Gamma\vdash b:B}
            {\Gamma,x:A\vdash b:B}}& \\[2ex]
        &\prftree[r]{axiom\(_{\sf P}\),}
            {\vdash\Prop:\Type} &
        &\prftree[r]{cumul\(_{\sf P}\),}
            {\Gamma\vdash A:\makesize[l]\Prop\Type}
            {\Gamma\vdash A:\makesize[l]\Set\Type} &
        \prftree[r]{convers,}
            {\Gamma\vdash A\equiv A':\cal A}
            {\Gamma\vdash a:A\phantom'}
            {\Gamma\vdash a:A'}& \\[2ex]
        &\prftree[r]{axiom\(_{\makesize{\sf S}{\sf P}}\),}
            {\makesize[r]{\vdash\Set}{\vdash\Prop}:\Type} &
        &\prftree[r]{cumul\(_{\makesize{\sf S}{\sf P}}\),}
            {\Gamma\vdash A:\makesize[l]\Set\Type}
            {\Gamma\vdash A:\Type} &
        \prftree[r]{reflection,}
            {\Gamma\vdash\makesize[l]{p:a=_Aa'}{a\equiv a':A}}
            {\Gamma\vdash a\equiv a':A}& \\[2ex]
        &\mathrlap{\text{Rules stating that \(\Gamma\vdash{\cdot}\equiv{\cdot}:A\) is a congruence relation have been omitted for brevity.}}
    \end{flalign*}
    \noindent\rule\textwidth{0.5pt}
    \begin{flalign*}
        &\mathrlap{\prftree[r,l]{\(\bb n\)-F,}{\(n\) numeral}
            {\vdash\bb n:\Set}} & &&
        \mathllap{\prftree[r,l]{\(\bb n\)-I,}{\(k<n\)}
            {\vdash k_{\bb n}:\bb n}}& \\[2ex]
        && &\mathclap{\prftree[r]{\(\bb n\)-E,}
            {\Gamma,k:\bb n\vdash C[k]:\cal C}
            {\Gamma\vdash c_0:C[0_{\bb n}]}
            {\dots}
            {\Gamma\vdash c_{n-1}:C[(n-1)_{\bb n}]}
            {\Gamma\vdash\sf{ind}^{\bb n}_C\,c_0\,\dots\,c_{n-1}:\Pi(k:\bb n)\,C[k]}} \\[2ex]
        && &\mathclap{\prftree[r]{\(\bb n\)-\(\beta\),}
            {\Gamma\vdash\sf{ind}^{\bb n}_C\,c_0\,\ldots\,c_{n-1}\,k_{\bb n}:C[k_{\bb n}]\phantom{{}\equiv c_k}}
            {\Gamma\vdash\sf{ind}^{\bb n}_C\,c_0\,\ldots\,c_{n-1}\,k_{\bb n}\equiv c_k:C[k_{\bb n}]}}
    \end{flalign*}
    \rule\textwidth{0.5pt}
    \begin{flalign*}
        \mathrlap{\prftree[r]{\(\bb N\)-F,}
            {\vdash\bb N:\Set}} &&
        \mathclap{\prftree[r]{\(\bb N\)-I\({}_0\),}
            {\vdash 0:\bb N}} &&
        \mathllap{\prftree[r]{\(\bb N\)-I\({}_\S\),}
            {\Gamma\vdash n:\bb N}
            {\Gamma\vdash\S\,n:\bb N}} \\[2ex] &&
        \mathclap{\prftree[r]{\(\bb N\)-E,}
            {\Gamma,n:\bb N\vdash C[n]:\cal C}
            {\Gamma\vdash c:C[0]}
            {\Gamma\vdash f:\Pi(n:\bb N)\,(C[n]\to C[\sf S\,n])}
            {\Gamma\vdash \sf{ind}^{\bb N}_C\,c\,f:\Pi(n:\bb N)\,C[n]}} \\[2ex]
        \mathrlap{\prftree[r]{\(\bb N\)-\(\beta_0\),}
            {\Gamma\vdash\sf{ind}^{\bb N}_C\,c\,f\,0:C[0]\phantom{{}\equiv c}}
            {\Gamma\vdash\sf{ind}^{\bb N}_C\,c\,f\,0\equiv c:C[0]}} && &&
        \mathllap{\prftree[r]{\(\bb N\)-\(\beta_\S\),}
            {\Gamma\vdash\sf{ind}^{\bb N}_C\,c\,f\,(\S\,n):C[\S\,n]\phantom{{}\equiv f\,n\,(\sf{ind}^{\bb N}_C\,c\,f\,n)}}
            {\Gamma\vdash\sf{ind}^{\bb N}_C\,c\,f\,(\S\,n)\equiv f\,n\,(\sf{ind}^{\bb N}_C\,c\,f\,n):C[\S\,n]}}
    \end{flalign*}
    \rule\textwidth{0.5pt}
    \begin{flalign*}
        &\mathrlap{\prftree[r]{\(\Sigma\)-F,}
            {\Gamma\vdash A:\cal C~~~\Gamma,x:A\vdash B[x]:\cal C}
            {\Gamma\vdash \Sigma(x:A)\,B[x]:\cal C}} & &&
        \mathllap{\prftree[r]{\(\Sigma\)-\makesize[l]{I,}{F,}}
            {\Gamma\vdash \Sigma(x:A)\,B[x]:\cal C~~~\Gamma\vdash a:A~~~\Gamma\vdash b:B[a]}
            {\Gamma\vdash \<a,b\>:\Sigma(x:A)\,B[x]}}& \\[2ex]
        && &\mathclap{\prftree[r]{\(\Sigma\)-E,}
            {\Gamma,p:\Sigma(x:A)\,B[x]\vdash C[p]:\cal C}
            {\Gamma\vdash f:\Pi(x:A)\,\Pi(y:B[a])\,C[\<x,y\>]}
            {\Gamma\vdash\sf{ind}^\Sigma_C\,f:\Pi(p:\Sigma(x:A)\,B[x])\,C[p]}} \\[2ex]
        && &\mathclap{\prftree[r]{\(\Sigma\)-\(\beta\),}
            {\Gamma\vdash\sf{ind}^\Sigma_C\,f\,\<a,b\>:C[\<a,b\>]\phantom{{}\equiv f\,a\,b}}
            {\Gamma\vdash\sf{ind}^\Sigma_C\,f\,\<a,b\>\equiv f\,a\,b:C[\<a,b\>]}}
    \end{flalign*}
    \rule\textwidth{0.5pt}
    \begin{flalign*}
        &\prftree[r]{\(\Pi\)-F,}
            {\Gamma\vdash A:\cal A}
            {\Gamma,x:A\vdash B[x]:\cal B}
            {\Gamma\vdash\Pi(x:A)\,B[x]:\cal B} &
        \mathllap{\prftree[r]{\(\Pi\)-I,}
            {\Gamma\vdash\Pi(x:A)\,B[x]:\cal B}
            {\Gamma,x:A\vdash b[x]:B[x]}
            {\Gamma\vdash\lambda(x:A)\,b[x]:\Pi(x:A)\,B[x]}}& \\[2ex]
        &\prftree[r]{\(\Pi\)-E,}
            {\Gamma\vdash f:\Pi(x:A)\,B[x]}
            {\Gamma\vdash a:A}
            {\Gamma\vdash f\,a:B[a]} &
        \prftree[r]{\(\Pi\)-\(\beta\),}
            {\Gamma\vdash(\lambda(x:A)\,b[x])\,a\makesize[r]{{}:B}{{}\equiv b}[a]\phantom{{}:B[a]}}
            {\Gamma\vdash(\lambda(x:A)\,b[x])\,a\equiv b[a]:B[a]}&
    \end{flalign*}
    \rule\textwidth{0.5pt}
    \begin{flalign*}
        && &\mathclap{\prftree[r]{\(\W\)-F,}
            {\Gamma\vdash A:\cal A}
            {\Gamma,x:A\vdash B[x]:\cal B}
            {\Gamma\vdash\W(x:A)\,B[x]:\cal A}} \\[2ex]
        && &\mathclap{\prftree[r]{\(\W\)-\makesize[l]{I,}{F,}}
            {\Gamma\vdash\W(x:A)\,B[x]:\cal A}
            {\Gamma\vdash a:A}
            {\Gamma\vdash d:B[a]\to\W(x:A)\,B[x]}
            {\Gamma\vdash\sf{tree}\,a\,d:\W(x:A)\,B[x]}}& \\[2ex]
        && &\mathclap{\prftree[r]{\(\W\)-\makesize[l]{E,}{F,}}
            {\prftree[noline]
                {\makesize[r]{\Gamma,t:{}}{\Gamma\vdash f:\Pi(a:A)\,\Pi(d:B[a]\to{}}\W(x:A)\,B[x]\makesize[l]{{}\vdash C[t]:\cal C}{)\,((\Pi(b:B[a])\,C[d\,b])\to C[\sf{tree}\,a\,d])}}
                {\Gamma\vdash f:\Pi(a:A)\,\Pi(d:B[a]\to\W(x:A)\,B[x])\,((\Pi(b:B[a])\,C[d\,b])\to C[\sf{tree}\,a\,d])}}
            {\sf{ind}^\W_C\,f:\Pi(t:\W(x:A)\,B[x])\,C[t]}} \\[2ex]
        && &\mathclap{\prftree[r]{\(\W\)-\(\beta\),}
            {\Gamma\vdash\sf{ind}^\W_C\,f\,(\sf{tree}\,a\,d):C[\sf{tree}\,a\,d]\phantom{{}\equiv f\,a\,d\,(\lambda(b:B\,a)\,\sf{ind}^\W_C\,f\,(d\,b))}}
            {\Gamma\vdash\sf{ind}^\W_C\,f\,(\sf{tree}\,a\,d)\equiv f\,a\,d\,(\lambda(b:B\,a)\,\sf{ind}^\W_C\,f\,(d\,b)):C[\sf{tree}\,a\,d]}}
    \end{flalign*}
    \rule\textwidth{0.5pt}
    \begin{flalign*}
        &\mathrlap{\prftree[r]{\(=\)-F,}
            {\Gamma\vdash A:\cal A}
            {\Gamma\vdash a:A}
            {\Gamma\vdash a':A}
            {\Gamma\vdash a=_Aa':\Prop}} & &&
        \mathllap{\prftree[r]{\(=\)-\makesize[l]{I,}{F,}}
            {\Gamma\vdash A:\cal A}
            {\Gamma\vdash a:A}
            {\Gamma\vdash\sf{refl}\,a:a=_Aa}}& \\[2ex]
        && &\mathclap{\prftree[r]{\(=\)-E,}
            {\Gamma,a:A,a':A,e:a=_Aa'\vdash C[a,a',e]:\cal C}
            {\Gamma\vdash f:\Pi(x:A)\,C[x,x,\sf{refl}\,x]}
            {\Gamma\vdash\sf{ind}^=_C\,f:\Pi(x,x':A)\,\Pi(e:x=_Ax')\,C[x,x',e]}} \\[2ex]
        && &\mathclap{\prftree[r]{\(=\)-\(\beta\),}
            {\Gamma\vdash\sf{ind}^=_C\,f\,a\,a\,(\sf{refl}\,a):C[a,a,\sf{refl}\,a]\phantom{{}\equiv f\,a}}
            {\Gamma\vdash\sf{ind}^=_C\,f\,a\,a\,(\sf{refl}\,a)\equiv f\,a:C[a,a,\sf{refl}\,a]}}
    \end{flalign*}
    \rule\textwidth{0.5pt}
    \begin{flalign*}
        &\mathrlap{\prftree[r]{\(\|\cdot\|\)-F,}
            {\Gamma\vdash A:\cal A}
            {\Gamma\vdash\|A\|:\Prop}} & &&
        \mathllap{\prftree[r]{\(\|\cdot\|\)-I,}
            {\Gamma\vdash\|A\|:\Prop}
            {\Gamma\vdash a:A}
            {\Gamma\vdash|a|:\|A\|}}& \\[2ex]
        && &\mathclap{\prftree[r]{\(\|\cdot\|\)-E,}
            {\Gamma,t:\|A\|\vdash C[t]:\cal C~~\Gamma\vdash f:\Pi(x:A)\,C[|x|]~~\Gamma\vdash h:\Pi(t:\|A\|)\,\Pi(c,c':C[t])\,(c=c')}
            {\Gamma\vdash\sf{ind}^{\|\cdot\|}_C\,f\,h:\,\Pi(t:\|A\|)\,C[t]}}& \\[2ex]
        && &\mathclap{\prftree[r]{\(\|\cdot\|\)-\(\beta\),}
            {\Gamma\vdash\sf{ind}^{\|\cdot\|}_C\,f\,h\,|a|:C[|a|]\phantom{{}\equiv f\,a}}
            {\Gamma\vdash\sf{ind}^{\|\cdot\|}_C\,f\,h\,|a|\equiv f\,a:C[|a|]}}
    \end{flalign*}
    \rule\textwidth{0.5pt}
    \begin{flalign*}
        &\mathrlap{\prftree[r]{\(/\)-F,}
            {\Gamma\vdash A:\cal A}
            {\Gamma,x:A,x':A\vdash R[x,x']:\cal B}
            {\Gamma\vdash A/R:\cal A}} & &&
        &\mathllap{\prftree[r]{\(/\)-\makesize[l]{I,}{F,}}
            {\Gamma\vdash A/R:\cal A}
            {\Gamma\vdash a:A}
            {\Gamma\vdash [a]_R:A/R}} \\[2ex]
        && &\mathclap{\prftree[r]{\(/\)-\makesize[l]{E,}{F,}}
            {\prftree[noline]
                {\Gamma,q:A/R\vdash C[q]:\cal C\qquad\Gamma\vdash f:\Pi(x:A)\,C[[x]_R]}
                {\Gamma\vdash h:\Pi(x,x':A)\,\Pi(r:R[x,x'])\,((\sf{ax}_/\,x\,x'\,r)_*\,(f\,x)=f\,x')}}
            {\Gamma\vdash\sf{ind}^/_C\,f\,h:\Pi(q:A/R)\,C[q])}} \\[2ex]
        &\mathrlap{\prftree[r]{\(/\)-\(\rm I_=\),}
            {\Gamma\vdash A/R:\cal A}
            {\Gamma\vdash\sf{ax}_/:\Pi(a,a':A)\,(R[a,a']\to[a]_R=[a']_R)}} & &&
        \mathllap{\prftree[r]{\(/\)-\(\beta\),}
            {\Gamma\vdash\sf{ind}^/_C\,f\,h\,[a]_R:C[[a]_R]\phantom{{}\equiv f\,a}}
            {\Gamma\vdash\sf{ind}^/_C\,f\,h\,[a]_R\equiv f\,a:C[[a]_R]}}&
    \end{flalign*}
    \rule\textwidth{0.5pt}
    \begin{flalign*}
        && \prftree[r]{propext.}
            {\vdash\sf{propext}:\Pi(P,P':\Prop)\,((P\liff P)\to(P=_\Prop P'))}& &
    \end{flalign*}

    \section{Model}\label{app:model}
    Using simultaneous induction on the derivation we define: \begin{itemize}
        \item for any well-formed context \(\Gamma\) an \(n\)-assembly \(\llb\Gamma\rrb\) for some \(n\);
        \item for any judgement \(\Gamma\vdash A:\Type\) a function \(\llb\Gamma\vdash A:\Type\rrb:\llb\Gamma\rrb\to\sf{Assem}^n\) for some \(n\);
        \item for any judgement \(\Gamma\vdash a:A\) a morphism \(\llb\Gamma\vdash a:A\rrb:\llb\Gamma\rrb\to\llb\Gamma\vdash A:\Type\rrb\).
    \end{itemize}
    For contexts, we define: \begin{align*}
        &\llb\rrb\coloneqq\mathbf1, &
        &\llb\Gamma,x:A\rrb\coloneqq\Sigma(G\in\llb\Gamma\rrb)\,\llb\Gamma\vdash A:\Type\rrb(G).
    \intertext{For the start and weakening laws, we define:}
        &\llb\Gamma,x:A\vdash x:A\rrb(G)\coloneqq\sf{pr}_1(G), &
        &\llb\Gamma,x:A\vdash b:B\rrb(G)\coloneqq\llb\Gamma\vdash b:B\rrb(G).
    \intertext{For the \(\beta\)-conversion law, if \(\Gamma\vdash A\equiv A':\cal A\), then we define:}
        &\mathrlap{\llb\Gamma\vdash a:A'\rrb(G)\coloneqq\llb\Gamma\vdash a:A\rrb(G),}
    \intertext{For the axioms and cumulativity laws, we define:}
        &\llb{}\vdash\Prop:\Type\rrb\coloneqq\nabla\sf{Subsing}, &
        &\llb\Gamma\vdash A:\makesize\Set\Type\rrb(G)\coloneqq\{\<z,z'\>~|~*\in\llb\Gamma:\Prop\rrb(G)\}, \\
        &\llb{}\vdash\makesize\Set\Prop:\Type\rrb\coloneqq\nabla\sf{PER}, &
        &\llb\Gamma\vdash A:\Type\rrb(G)\coloneqq\bb N/\llb\Gamma\vdash A:\Set\rrb(G).
    \intertext{For the finite types, we define:}
        &\llb{}\vdash\bb n:\Set\rrb\coloneqq\mathbf n, &
        &\llb{}\vdash k_n:\bb n\rrb\coloneqq\{k\}, \\
        &\mathrlap{\llb\Gamma\vdash\sf{ind}^{\bb n}_C\,c_0\,\dots\,c_{n-1}:\Pi(k:\bb n)\,C[k]\rrb(G)(\{k\})\coloneqq\llb\Gamma\vdash c_k:C[k]\rrb(G).} \\
    \intertext{For the natural numbers, we define:}
        &\mathrlap{\llb{}\vdash\bb N:\Set\rrb\coloneqq\mathbf N,\qquad\llb{}\vdash 0:\bb N\rrb\coloneqq\{0\}\qquad
        \llb\Gamma\vdash\sf S\,n:\bb N\rrb(G)\coloneqq\sf S(\llb\Gamma\vdash n:\bb N\rrb(G)),} \\
        &\mathrlap{\llb\Gamma\vdash\sf{ind}^{\bb N}_C\,c\,f:\Pi(n:\bb N)\,C[n]\rrb(G)(\{n\})\coloneqq{}} \\
        &\mathrlap{\qquad\llb\Gamma\vdash f:\Pi(n:\bb N)\,(C[n]\to C[\sf S\,n])\rrb(G)(\{n-1\})(\dots}\\
        &\mathrlap{\qquad\qquad\llb\Gamma\vdash f:\Pi(n:\bb N)\,(C[n]\to C[\sf S\,n])\rrb(G)(\{0\})(\llb\Gamma\vdash c:C[0]\rrb(G))).}
    \intertext{For \(\Sigma\)-types, we define:}
        &\mathrlap{\llb\Gamma\vdash\makesize\Sigma\W(x:A)\,B[x]:\makesize{\cal C}{\cal A}\rrb(G)\coloneqq\makesize\Sigma\W(X\in\llb\Gamma\vdash A:\makesize{\cal C}{\cal A}\rrb(G))\,\llb\Gamma,x:A\vdash B[x]:\makesize{\cal C}{\cal A}\rrb(\<G,X\>),} \\
        &\mathrlap{\llb\Gamma\vdash\<a,b\>:\Sigma(x:A)\,B[x]\rrb(G)\coloneqq\<\llb\Gamma\vdash a:A\rrb(G),\llb\Gamma\vdash b:B[a]\rrb(G)\>,} \\
        &\mathrlap{\llb\Gamma\vdash\sf{ind}^\Sigma_C\,f:\Pi\makesize[l]{(p:\Sigma(x:A)}{(x:A)\,\Pi(y:{}}B[x])\,C[p]\rrb(G)(\<A,B\>)\coloneqq{}} \\
        &\mathrlap{\makesize[l]{\qquad\llb\Gamma\vdash{}}{\llb\Gamma\vdash\sf{ind}^\Sigma_C\,}f:\Pi(x:A)\,\Pi(y:B[x])\,C[\<x,y\>]\rrb(G)(A)(B).}
    \intertext{For \(\Pi\)-types, we define:}
        &\mathrlap{\llb\Gamma\vdash\makesize\Pi\W(x:A)\,B[x]:\makesize{\cal B}{\cal A}\rrb(G)\coloneqq\makesize\Pi\W(X\in\llb\Gamma\vdash A:\cal A\rrb(G))\,\llb\Gamma,x:A\vdash B[x]:\cal B\rrb(\<G,X\>),} \\
        &\mathrlap{\llb\Gamma\vdash\lambda(x:A)\,b[x]:\Pi(x:A)\,B[x]\rrb(G)(A)\coloneqq\llb\Gamma,x:A\vdash b[x]:B[x]\rrb(\<G,A\>),} \\
        &\mathrlap{\llb\Gamma\vdash f\,a:B[a]\rrb(G)\coloneqq\llb\Gamma\vdash f:\Pi(x:A)\,B[x]\rrb(G)(\llb\Gamma\vdash a:A\rrb(G)).}
    \intertext{For \(\W\)-types, we define:}
        &\mathrlap{\llb\Gamma\vdash\W(x:A)\,B[x]:\cal A\rrb(G)\coloneqq\W(X\in\llb\Gamma\vdash A:\cal A\rrb(G))\,\llb\Gamma,x:A\vdash B[x]:\cal B\rrb(\<G,X\>),} \\
        &\mathrlap{\llb\Gamma\vdash\sf{tree}\,a\,d:\W(x:A)\,B[x]\rrb(G)\coloneqq\{\<A_0,B_0,A_1,\dots,A_n\>~|~A_0=\llb\Gamma\vdash a:A\rrb(G)}\\
        &\mathrlap{\qquad{}\land\<A_1,B_1,A_2,\dots,A_n\>\in\llb\Gamma\vdash d:B[a]\to\W(x:A)\,B[x]\rrb(G)(B_0)\},} \\
        &\mathrlap{\llb\Gamma\vdash\sf{ind}^\W_C\,f:\Pi(t:\W(x:A)\,B[x])\,C[t]\rrb(G)(T)\coloneqq} \\
        &\mathrlap{\qquad\llb\Gamma\vdash f:\Pi(a:A)\,\Pi(d:B[a]\to\W(x:A)\,B[x])\,((\Pi(b:B[a])\,C[d\,b])\to C[\sf{tree}\,a\,d])\rrb} \\
        &\mathrlap{\qquad(G)(\sf{root}(T))(B_0\mapsto\{\<A_1,B_1,A_2,\dots,A_n\>~|~\<\sf{root}(T),B_0,A_1,\dots,A_n\}\in T\})(\dots).}
    \intertext{For propositional equality, we define:}
        &\mathrlap{\llb\Gamma\vdash a=_Aa':\Prop\rrb(G)\coloneqq(\llb\Gamma\vdash a:A\rrb(G)=_{\llb\Gamma\vdash A:\Type\rrb(G)}\llb\Gamma\vdash a':A\rrb(G)),} \\
        &\mathrlap{\llb\Gamma\vdash\makesize{\sf{refl}\,a:a=_Aa}{a=_Aa':\Prop}\rrb(G)\coloneqq*,} \\
        &\mathrlap{\llb\Gamma\vdash\sf{ind}^=_C\,f:\Pi(x,x':A)\,\Pi(e:x=_Ax')\,C[x,x',e]\rrb(G)(A)(A')(E)\coloneqq} \\
        &\mathrlap{\makesize[l]{\qquad\llb\Gamma\vdash{}}{\llb\Gamma\vdash\sf{ind}^=_C\,}f:\Pi(x:A)\,C[x,x,\sf{refl}\,x]\rrb(G)(A).}
    \intertext{For propositional truncation, we define:}
        &\mathrlap{\llb\Gamma\vdash\|A\|:\Prop\rrb(G)\coloneqq\|\llb\Gamma\vdash A:\cal A\rrb(G)\|,} \\
        &\mathrlap{\llb\Gamma\vdash|a|:\|A\|\rrb(G)\coloneqq*,} \\
        &\mathrlap{\llb\Gamma\vdash\sf{ind}^{\|\cdot\|}_C\,f\,h:\Pi(t:\|A\|)\,C[t]\rrb(G)(T)\coloneqq\llb\Gamma\vdash f:\Pi(x:A)\,C[|x|]\rrb(G)(A)} \\
        &\mathrlap{\qquad\text{for any }A\in\llb\Gamma\vdash A:\Type\rrb(G).}
    \intertext{For quotient types, we define:}
        &\mathrlap{\llb\Gamma\vdash A/R:\cal A\rrb(G)\coloneqq\llb\Gamma\vdash A:\cal A\rrb(G)/\llb\Gamma,x:A,x':A\vdash R[x,x']:\cal B\rrb(G),} \\
        &\mathrlap{\llb\Gamma\vdash[a]_R:A/R\rrb(G)\coloneqq[\llb\Gamma\vdash a:A\rrb(G)],} \\
        &\mathrlap{\llb\Gamma\vdash\sf{ind}^{\|\cdot\|}_C\,f\,h:\Pi(t:\|A\|)\,C[t]\rrb(G)(Q)\coloneqq\llb\Gamma\vdash f:\Pi(x:A)\,C[[x]_R]\rrb(G)(A)} \\
        &\mathrlap{\qquad\text{for any }A\in Q.}
    \intertext{For propositional extensionality, we define:}
        &\mathrlap{\llb{}\vdash\sf{propext}:\Pi(P,P':\Prop)\,((P\liff P)\to(P=_\Prop P'))\rrb(S)(S')(F)\coloneqq*.}
    \end{align*}
    We can see with induction that these interpretations are well-defined, so in particular that every function \(\llb\Gamma\vdash a:A\rrb\) is indeed tracked by a natural number and therefore a morphism.

    \section{Equivalent Definitions for Power types}\label{app:power_types}

    In \(\lambda\)C+ we use \(\cal P\,A\coloneqq A\to\Prop\) and \(x\in Y\coloneqq Y\,x\).
    We give two alternative options: \begin{itemize}
        \item[(q)]
        We can use \textit{quotients} by defining \(\cal P_{\rm q}\,A\coloneqq(A\to\Set)/{\sim}\) where \(\sim\) is the equivalence relation on \(A\to\Set\) given by: \((X\sim Y)\coloneqq\Pi(z:A)\,(X\,z\liff Y\,z)\).
        In this case we define \({\in_{\rm q}}:A\to\cal P_{\rm q}\to\Set\) by \((x\in_{\rm q}|X|)\coloneqq\|X\,x\|\), using the elimination rule for quotient types.
        \item[(e)]
        We can use \textit{setoids} which consist of a type \(A\) and an equivalence relation \(\sim\) on \(A\). 
        For such a setoid \(\<A,\sim\>\) we define a new setoid \(\cal P_{\rm e}\,\<A,\sim\>\) with type \(A\to\Set\) and equivalence relation given by: \(X\sim Y\coloneqq\Pi(z:A)\,(z\in_{\rm e}X\liff z\in_{\rm e}Y)\), where \(\in_{\rm e}\) is given by \(x\in_{\rm e}X\coloneqq\|\Sigma(x':A)\,((x'\sim x)\times X\,x')\|\).
        Any type \(A\) can be seen as a setoid with equivalence relation \(=_A\), and we will write \(\cal P_{\rm e}\,A\) for \(\cal P_{\rm e}\<A,=\>\).
    \end{itemize}
    
    \begin{proposition}
        For any type \(A\) and any numeral \(n\) there exist functions in \(\lambda\rm C{+}\): \[\begin{tikzcd}
            \cal P^n_{\rm q}\,A \ar[r, "s_n", shift left] &
            \cal P^n\,A, \ar[l, "s_n^{-1}", shift left] &
            \cal P_{\rm e}^n\,A \ar[r, "t_n", shift left] &
            \cal P^n\,A, \ar[l, "t_n^{-1}", shift left]
        \end{tikzcd}\] that satisfy the following statements: \begin{align*}
            s_n\,y\in X&\liff y\in_{\rm q}s_{n+1}^{-1}\,X, &
            x\in s_{n+1}\,Y&\liff s_n^{-1}\,x\in_{\rm q}Y, &
            x=s_n\,y&\liff s_n^{-1}\,x=y, \\
            \makesize ts_n\,\makesize zy\in X&\liff\makesize zy\in_{\makesize{\rm e}{\rm q}}\makesize ts_{n+1}^{-1}\,X, &
            x\in\makesize ts_{n+1}\,\makesize ZY&\liff\makesize ts_n^{-1}\,x\in_{\makesize{\rm e}{\rm q}}\makesize ZY, &
            x=\makesize ts_n\,\makesize zy&\liff\makesize ts_n^{-1}\,x\sim\makesize zy.
        \end{align*}
    \end{proposition}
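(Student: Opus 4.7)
\smallskip

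My plan is to construct $s_n, s_n^{-1}, t_n, t_n^{-1}$ by induction on $n$, verifying the three equivalences simultaneously at each step. In the base case $n=0$ we have $\cal P^0\,A = \cal P^0_{\rm q}\,A = \cal P^0_{\rm e}\,A = A$, so we take $s_0 = s_0^{-1} = t_0 = t_0^{-1} = \sf{id}_A$; the equivalences are immediate. For the inductive step with $s$, suppose $s_n : \cal P^n_{\rm q}\,A \to \cal P^n\,A$ and $s_n^{-1}$ are defined and bijective in the sense of the third equivalence. I would then define
\begin{align*}
s_{n+1}\,[X] &\coloneqq \lambda(x : \cal P^n\,A)\, \|X\,(s_n^{-1}\,x)\|, \\
s_{n+1}^{-1}\,X &\coloneqq [\lambda(y : \cal P^n_{\rm q}\,A)\, X\,(s_n\,y)],
\end{align*}
where on the right of $s_{n+1}^{-1}$ we use that $X\,(s_n\,y) : \Prop$ and hence also lives in $\Set$ by cumulativity. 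Well-definedness of $s_{n+1}$ on the quotient uses that $X \sim X'$ implies $X\,z \liff X'\,z$ for all $z$; combined with $\sf{propext}$ inside the truncation this gives $\lambda x\, \|X\,(s_n^{-1}\,x)\| = \lambda x\, \|X'\,(s_n^{-1}\,x)\|$ via function extensionality (which we have from reflection).

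The three equivalences at level $n+1$ then unfold directly: $s_{n+1}\,[Y]\,x$ is $\|Y\,(s_n^{-1}\,x)\|$, which as a proposition in $\Prop$ is equivalent to $Y\,(s_n^{-1}\,x)$, i.e.\ to $s_n^{-1}\,x \in_{\rm q} [Y]$, giving the first property. The second property, $s_n\,y \in X \liff y \in_{\rm q} s_{n+1}^{-1}\,X$, becomes $X\,(s_n\,y) \liff \|X\,(s_n\,y)\|$, again true because $X\,(s_n\,y) : \Prop$. For the third equivalence, $s_{n+1}\,[Y] = X$ means $\forall x.\, \|Y\,(s_n^{-1}\,x)\| \liff X\,x$, which by substituting $x = s_n\,y$ (using the inductive third property) and the propositional-truncation collapse becomes $\forall y.\, Y\,y \liff X\,(s_n\,y)$, i.e.\ $[Y] = s_{n+1}^{-1}\,X$.

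The construction for $t_n$ is analogous but works with setoids: $t_{n+1}\,X \coloneqq \lambda x.\,\|\Sigma(y : \cal P_{\rm e}^n\,A)\,((t_n\,y =_{\cal P^n\,A} x) \times X\,y)\|$, with $t_{n+1}^{-1}$ again the canonical inclusion composed with $t_n$. Here the setoid equivalence on $\cal P_{\rm e}^{n+1}\,A$ is unfolded using the inductive relation $t_n^{-1}\,x \sim y \liff x = t_n\,y$, which lets us convert extensional equality of $X, X' : \cal P_{\rm e}^n\,A \to \Set$ to pointwise equivalence at the level of $\cal P^n\,A$.

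I expect the main obstacle to be the bookkeeping required to phrase the inductive hypothesis in the setoid case correctly, since the third equivalence for $t_n$ compares propositional equality in $\cal P^n\,A$ with the setoid equivalence on $\cal P_{\rm e}^n\,A$, which itself was defined via $\in_{\rm e}$. Once the inductive hypothesis is stated in this form, the well-definedness of $t_{n+1}$ and the three equivalences follow by unfolding definitions and using $\sf{propext}$, function extensionality, and the reflection rule to convert logical equivalences of $\Prop$-valued predicates into definitional equalities in $\cal P^{n+1}\,A$. No new type-theoretic principle beyond those already assumed in $\lambda\rm C{+}$ is needed.
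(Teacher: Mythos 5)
Your proposal is correct and follows essentially the same route as the paper: the same base case, the same inductive definitions of $s_{n+1}$, $s_{n+1}^{-1}$, $t_{n+1}$, $t_{n+1}^{-1}$ (yours merely unfold the definitions of $\in_{\rm q}$ and $\in_{\rm e}$, and state the setoid witness via $t_n\,y=x$ rather than the equivalent $t_n^{-1}\,x\sim y$), and the same simultaneous induction on $n$ using $\sf{propext}$, function extensionality, and the collapse of $\|P\|$ for $P:\Prop$ to verify the three equivalences.
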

    
    \begin{proof}
        We define \(s_0,s_0^{-1},t_0,t_0^{-1}\coloneqq\sf{id}_A\), and: \begin{align*}
            s_{n+1}\,Y&\coloneqq\lambda(x:\cal P^n\,A)\,(s_n^{-1}\,x\in_{\rm q}Y), &
            s_{n+1}^{-1}\,X&\coloneqq|\lambda(y:\cal P_{\rm q}^n\,A)\,(s_n\,y\in X)|, \\
            \makesize ts_{n+1}\,\makesize ZY&\coloneqq\lambda(x:\cal P^n\,A)\,(\makesize ts_n^{-1}\,x\in_{\makesize{\rm e}{\rm q}}\makesize ZY), &
            \makesize ts_{n+1}^{-1}\,X&\coloneqq\phantom|\lambda(\makesize zy:\cal P_{\rm e}^n\,A)\,(\makesize ts_n\,\makesize zy\in X).
        \end{align*}
        We only prove the statements about \(t\), since the proof for the statements about \(s\) are similar and slightly easier.
        With simultaneous induction on \(n\) we see: \begin{align*}
            x\in t_{n+1}\,Z 
            &\liff t_{n+1}\,Z\,x \\
            &\liff t_n^{-1}\,x\in_{\rm e}Z,
            \\[2ex]
            t_n\,z\in X
            &\liff\|\Sigma(x:\cal P^n\,A)\,((x=t_n\,z)\times(x\in X))\| \\
            &\liff\|\Sigma(x:\cal P^n\,A)\,((t_n^{-1}\,x\sim z)\times(x\in X))\| \\
            &\liff\|\Sigma(z':\cal P_{\rm e}^n\,A)\,((t_n^{-1}\,(t_n\,z')\sim z)\times(t_n\,z'\in X))\| \\
            &\liff\|\Sigma(z':\cal P_{\rm e}^n\,A)\,((z'\sim z)\times(t_n\,z'\in X))\| \\
            &\liff\|\Sigma(z':\cal P_{\rm e}^n\,A)\,((z'\sim z)\times t_{n+1}^{-1}\,X\,z')\| \\
            &\liff z\in_{\rm e}t_{n+1}^{-1}\,X,
            \\[2ex]
            x=t_0\,z
            &\liff x=z \\
            &\liff t_0^{-1}\,x\sim z,
            \\[2ex]
            X=t_{n+1}\,Z
            &\liff\Pi(x:\cal P\,A)\,(X\,x\liff t_{n+1}\,Z\,x) \\
            &\liff\Pi(z:\cal P_{\rm e}\,A)\,(X\,(t_n\,z)\liff t_{n+1}\,Z\,(t_n\,z)) \\
            &\liff\Pi(z:\cal P_{\rm e}\,A)\,(t_n\,z\in X\liff t_n^{-1}\,(t_n\,z)\in_{\rm e}Z) \\
            &\liff\Pi(z:\cal P_{\rm e}\,A)\,(z\in_et_{n+1}^{-1}\,X\liff\|\Sigma(z':\cal P_{\rm e}\,A)\,((z'\sim t_n^{-1}\,(t_n\,z))\times Z\,z')\|) \\
            &\liff\Pi(z:\cal P_{\rm e}\,A)\,(z\in_et_{n+1}^{-1}\,X\liff\|\Sigma(z':\cal P_{\rm e}\,A)\,((z'\sim z)\times Z\,z')\|) \\
            &\liff\Pi(z:\cal P_{\rm e}\,A)\,(z\in_et_{n+1}^{-1}\,X\liff z\in_{\rm e}Z) \\
            &\liff t_{n+1}^{-1}\,X\sim Z.\qedhere
        \end{align*}
    \end{proof}

    \section{De Jongh's Theorem for HAH}\label{app:de_jongh}

    \newtheorem*{corollary*}{Corollary}

    \begin{corollary*}[De Jongh's theorem for \(\rm{HAH}\)]
        Let \(A[P_0,\dots,P_{n-1}]\) be a propositional formula with propositional variables \(P_0,\dots,P_{n-1}\).
        If \(A\) is not provable in intuitionistic propositional logic then we can construct sentences \(B_0,\dots,B_{n-1}\) in the language of \(\rm{HAH}\) such that \(A[B_0,\dots,B_{n-1}]\) is not provable in \(\rm{HAH}\).
    \end{corollary*}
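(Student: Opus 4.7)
The plan is to obtain De Jongh's theorem for HAH as a direct application of Passmann's theorem for IZF. Two observations bridge the gap: (i) HAH embeds into IZF via a standard set-theoretic translation, so that non-derivability in IZF implies non-derivability in HAH for every formula in the shared fragment; and (ii) the refuting sentences $B_0,\dots,B_{n-1}$ that Passmann constructs for IZF can already be stated in the language of HAH.

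For (i), I define a translation $\tau$ from HAH into IZF by interpreting the sort $n$ as $\cal P^n(\bb N)$, sending $0,\S,+,\times$ to their standard set-theoretic meanings on $\bb N$, interpreting $\in^n$ and $=^n$ as the corresponding set-theoretic relations, and relativising each quantifier $\forall x^n$ and $\exists x^n$ to $\cal P^n(\bb N)$. I then verify that every HAH-axiom translates to an IZF-theorem: the arithmetic axioms are routine; extensionality is immediate from the extensionality axiom of IZF; comprehension follows from full separation applied inside $\cal P^n(\bb N)$; and induction follows from the inductive character of $\bb N$ in IZF. The inference rules of intuitionistic higher-order logic are preserved by $\tau$, so if $\rm{HAH}\vdash\varphi$, then $\rm{IZF}\vdash\varphi^\tau$; the contrapositive gives the conservativity needed on the image of $\tau$.

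For (ii), I inspect Passmann's construction of $\tilde B_0,\dots,\tilde B_{n-1}$ from a finite Kripke countermodel for $A$ in IPC. His sentences are built by combining arithmetical atomic formulas with a bounded hierarchy of power sets over $\bb N$; they never appeal to the full cumulative hierarchy, only to finitely many iterations of $\cal P$ applied to $\bb N$. Each such sentence can therefore be rewritten, modulo $\tau$, as a HAH-sentence $B_i$ in the monadic relational hierarchy, with $B_i^\tau$ provably equivalent to $\tilde B_i$ in IZF.

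Combining the two ingredients finishes the argument: suppose $A[P_0,\dots,P_{n-1}]$ is unprovable in IPC, and take the HAH-sentences $B_i$ obtained in (ii) from Passmann's $\tilde B_i$. If $\rm{HAH}\vdash A[B_0,\dots,B_{n-1}]$, then by (i) we would have $\rm{IZF}\vdash A[\tilde B_0,\dots,\tilde B_{n-1}]$, contradicting Passmann's theorem. The main obstacle I expect is step (ii): one must go carefully through Passmann's construction to confirm that every quantifier occurring in the $\tilde B_i$ can indeed be expressed as a quantifier over some $\cal P^n(\bb N)$, rather than requiring unrestricted set quantification. Step (i) and the final combination are essentially bookkeeping.
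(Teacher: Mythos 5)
Your proposal follows essentially the same route as the paper: view HAH as a subtheory of IZF via the translation sending sort $n$ to $\cal P^n(\omega)$, and observe that Passmann's refuting sentences can already be stated in the language of HAH, so unprovability in IZF transfers back. The step you flag as the main obstacle (checking that Passmann's sentences live in the bounded power-set hierarchy over $\bb N$) is exactly the part the paper carries out explicitly, by rewriting the cardinality statements $\Gamma_k$ and $\Delta_l$ as HAH-formulas using Dedekind-infiniteness and definable bijections between subsets of $\cal P(\bb N)$.
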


    \newpage

    \begin{proof}
        Firstly, every higher-order arithmetical formula can be seen as a first-order formula in the language of set theory by interpreting \(\exists x^n\) as \(\exists(x\in\cal P^n(\omega))\) and \(\forall x^n\) as \(\forall(x\in\cal P^n(\omega))\).
        IZF proves the axioms of HAH so we can view HAH as a subtheory of IZF.
        Now, in the proof of De Jongh's theorem for IZF, Passmann \cite{Passmann2020} constructs suitable \(B_0,\dots,B_{n-1}\) of the form: \[
            \textstyle\biglor_k(\Gamma_k\land\bigland_l\neg(\neg\Delta_l\land\Delta_{l+1})),
        \] where \(\Gamma_k\) and \(\Delta_l\) are roughly the following set theoretic formulas: \begin{align*}
            \Gamma_k\coloneqq(|\cal P(1)|<k), &&
            \Delta_l\coloneqq(|\cal P(\aleph_0)|<\aleph_l).
        \end{align*}
        More precisely, the formula \(\Gamma_k\) can be stated in the language of HAH as follows: \begin{align*}
            \Gamma_k&\coloneqq\textstyle\forall X_0^1\,\cdots\,\forall X_{k-1}^1\,(\bigland_{i<k}(\forall y^0(y\in X_i \to y=0))\to\biglor_{i<j<k}(x_i=x_j)).
        \intertext{
        Note that \(\Gamma_k\) is not trivial in constructive set theory because we cannot prove for every set of the form \(\{x\in 1\,|\,A\}\) that it equal to \(0=\emptyset\) or \(1=\{\emptyset\}\).
        For \(\Delta_l\) we can take any of the equivalent definitions for the statement \(|\cal P(\aleph_0)|<\aleph_l\) in ZFC.
        One possible definition of \(\Delta_l\) in the language of HAH is the following:}
            \Delta_l&\coloneqq\textstyle\forall\cal X_0^2\,\cdots\,\forall\cal X_l^2\,(\bigland_{i<l+1}\sf{is\text-infinite}(\cal X_i)\to\biglor_{i<j<l+1}(|\cal X_i|=|\cal X_j|)).
        \end{align*}
        Note that the \(\cal X_i\) are of level 2 so in IZF they will be interpreted as elements of \(\cal P^2(\omega)\) which are subsets of \(\cal P(\omega)\).
        So \(\Delta_l\) states that for any \(l+1\) infinite subsets of \(\cal P(\bb N)\) there must be two that have the same cardinality.
        This means that we have at most \(l\) infinite subsets of \(\cal P(\bb N)\) with distinct cardinalities, in which case we would have \(\bb \omega=\aleph_0,\dots,\aleph_{l-1}=\cal P(\omega)\).
        Here we make use of the following definitions: \begin{align*}
            |X^{n+1}|=|Y^{n+1}|&\coloneqq\exists\makesize[r]{Z^{n+1}}{Y^{n+1}}\,(\forall(x\in X)\,\exists!(\makesize yx\in\makesize YX)\,(\<x,y\>\in Z)\land{}\\
            &\phantom{{}\coloneqq\exists\makesize[r]{Z^{n+1}}{Y^{n+1}}\,(}\forall(\makesize yx\in\makesize YX)\,\exists!(x\in X)\,(\<x,y\>\in Z)), \\
            \sf{is\text-infinite}(X^{n+1})&\coloneqq\makesize\exists\forall Y^{n+1}\,(\makesize\exists\forall(x\in X)\,(x\notin Y)\land\forall(y\in Y)\,(y\in X)\land|X|=|Y|).
        \end{align*}
        Note that we use Dedekinds definition of infinity because it is easier to state in the language of HAH.
        It is equivalent to the usual notion of infinity in ZFC.
        Now, suppose that we have a propositional formula \(A[P_0,\dots,P_{n-1}]\) that is not provable in intuitionistic logic.
        Passmann shows that there are \(B_0,\dots,B_{n-1}\) such that \(A[B_0,\dots,B_{n-1}]\) is not provable in IZF.
        But we can view \(B_0,\dots,B_{n-1}\) as HAH-formulas and then \(A[B_0,\dots,B_{n-1}]\) is certainly not provable in HAH because we can view HAH as a subtheory of IZF.
    \end{proof}
\end{document}